%
\documentclass[reqno,11pt]{amsart}
\usepackage{amssymb,amsmath}
\evensidemargin20pt
\oddsidemargin20pt
\textwidth6in

\numberwithin{equation}{section}
\newtheorem{theorem}[equation]{Theorem}
\newtheorem{lemma}[equation]{Lemma}

\newtheorem{corollary}[equation]{Corollary}

\theoremstyle{definition}
\newtheorem{example}[equation]{Example}
\newtheorem*{remark}{Remark}

\def\L{\mathbf{L}}
\def\P{\mathbf{P}}
\def\Q{Q}

\title[Convolution and inverse relations for partial Bell polynomials]{Some convolution identities and an inverse relation involving partial Bell polynomials}
\author{Daniel Birmajer}
\address{Department of Mathematics\\ Nazareth College\\ 4245 East Ave.\\ Rochester, NY 14618}
\author{Juan B. Gil}
\address{Penn State Altoona\\ 3000 Ivyside Park\\ Altoona, PA 16601}
\author{Michael D. Weiner}

\keywords{Inverse relations, partial Bell polynomials, convolution identities}

\begin{document}

\begin{abstract}
We prove an inverse relation and a family of convolution formulas involving partial Bell polynomials. Known and some presumably new combinatorial identities of convolution type are discussed. Our approach relies on an interesting multinomial formula for the binomial coefficients. The inverse relation is deduced from a parametrization of suitable identities that facilitate dealing with compositions of Bell polynomials.
\end{abstract}

\maketitle

\section{Introduction}
The main goal of this paper is to obtain the inverse relation (Theorem~\ref{thm:invRel}):
\begin{equation}\label{InverseRelation}
\begin{aligned}
y_n =& \sum_{k=1}^n \binom{an+bk}{k-1}(k-1)!B_{n,k}(x),\\ 
x_n =& \sum_{k=1}^n \frac{an+bk}{an+b} \binom{-an-b}{k-1}(k-1)!B_{n,k}(y),
\end{aligned}
\end{equation}
where $a$ and $b$ are integers (not both equal to 0), $x$ and $y$ stand for $x=(x_1,x_2,\dotsc)$ and $y=(y_1,y_2,\dots)$, and $B_{n,k}(z)$ denotes the $(n,k)$th partial Bell polynomial in the variables $z_1,z_2,\dots,z_{n-k+1}$. There is a vast literature about Bell polynomials and their applications, see e.g. \cite{Bell,Charalambides,Comtet,WW09}. Recall that
\begin{equation*}
B_{n,k}(z) =\sum_{i\in\pi(n,k)}\frac{n!}{i_1!i_2!\cdots}\left(\frac{z_1}{1!}\right)^{i_1}\left(\frac{z_2}{2!}\right)^{i_2}\cdots,
\end{equation*}
where $\pi(n,k)$ is the set of all sequences $i=(i_1,i_2,\dots)$ of nonnegative integers such that
\begin{equation*}
i_1+i_2+\dots=k \;\text{ and }\; i_1+2i_2+3i_3+\dots=n.
\end{equation*}

The above relation generalizes similar inverse relations available in the literature. For instance, with $b=1$ we recover a result by Comtet \cite[Theorem~F, p.~151]{Comtet}, which he obtained by means of the inversion formula of Lagrange. Moreover, for $a=0$ and $b=1$, the inverse pair \eqref{InverseRelation} fits into the Fa{\`a} di Bruno relations given by Chou, Hsu, and Shiue \cite{CHS06}.

To achieve \eqref{InverseRelation} we develop an alternative approach. We consider a parametrization of suitable identities (Theorem~\ref{lambdaTheorem}) that allow us to deal with nested compositions of partial Bell polynomials. This approach seems novel and relies on interesting (presumably new) convolution formulas for partial Bell polynomials, see Section~\ref{sec:BellPolynomials}.

In particular, if $\alpha(\ell,m)$ is a polynomial in $\ell$ and $m$ of degree at most one, then for any sequence $x=(x_1,x_2,\dots)$ we get (Corollary~\ref{cor:Bellconvolution}):%
\footnote{Throughout this paper, the choice of parameters is restricted to those for which the expressions are defined.}
\begin{equation*}
\sum_{\ell=0}^{k} \sum_{m=\ell}^n \frac{\tau\binom{\alpha(\ell,m)}{k-\ell}\binom{\tau-\alpha(\ell,m)}{\ell}\binom{n}{m}}{\alpha(\ell,m)\big(\tau-\alpha(\ell,m)\big)\binom{k}{\ell}} B_{m,\ell}(x) B_{n-m,k-\ell}(x) 
= \tfrac{\tau-\alpha(0,0)+\alpha(k,n)}{\alpha(k,n)(\tau-\alpha(0,0))}\binom{\tau}{k} B_{n,k}(x).
\end{equation*}
A general convolution identity is presented in Theorem~\ref{thm:BellConvolution}, and cases of particular interest are given in Corollary~\ref{cor:basicConvolution}. For the special case when $\alpha(\ell,m)=r$ is a positive integer $r\le k$, we recover the identity
\begin{equation*}
\binom{k}{r} B_{n,k}(x) = \sum_{m=k-r}^{n-r} \binom{n}{m} B_{m,k-r}(x) B_{n-m,r}(x),
\end{equation*}
recently discovered by Cvijovi{\'c} \cite[Eqn. (1.4)]{Cvijovic}. Identities involving Bell polynomials have of course the usual direct consequences for the Stirling numbers of first and second kind.

\medskip
This paper is essentially self-contained. We start with a multi-variable version of the well known identity $\sum_{j=0}^n (-1)^j\binom{n}{j}P(j) =0$ for any polynomial $P(x)$ of degree less than $n$, and use it to prove Theorem~\ref{thm:binomial}. This theorem provides core identities for the convolution formulas of Bell polynomials given in Section~\ref{sec:BellPolynomials}. In Section~\ref{sec:InverseRelation}, we prove the inverse relation \eqref{InverseRelation} and discuss some special cases. 

It is worth noting that the results of Section~\ref{sec:convolutions} can be viewed as a template for a variety of combinatorial identities of convolution type. For instance, special choices of the parameters in equations \eqref{eq:th1a}, \eqref{eq:th1b}, and \eqref{eq:th1c} give us the famous Hagen--Rothe identities.

\section{Convolution identities}\label{sec:convolutions}

Given $n\in\mathbb{N}$ it is known that, for any polynomial $P(x)$ of degree less than $n$,
\begin{equation*}
 \sum_{j=0}^n (-1)^j\binom{n}{j}P(j) =0.
\end{equation*}

For several variables, we have:
\begin{lemma}\label{lem:identity0}
Given nonnegative integers $v_1, \dots, v_d$, 
\begin{equation*}
 \sum_{i_1,\dots,i_d\ge 0} (-1)^{i_1+\dots+i_d}\binom{v_1}{i_1}\cdots\binom{v_d}{i_d} P(i_1,\dots,i_d)=0.
\end{equation*}
for any polynomial $P(x_1,\dots,x_d)$ of degree less than $k=v_1+\dots+v_d$.
\end{lemma}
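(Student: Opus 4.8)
The plan is to reduce this multivariate statement to the one-variable identity recalled just above the lemma, exploiting the fact that the summand is a product over the $d$ indices. Both sides of the asserted identity are linear in $P$, so it suffices to treat the case where $P(x_1,\dots,x_d)=x_1^{a_1}\cdots x_d^{a_d}$ is a monomial of total degree $a_1+\dots+a_d<k=v_1+\dots+v_d$. (Finiteness of the sum is automatic, since $\binom{v_t}{i_t}=0$ once $i_t>v_t$, so each index effectively runs from $0$ to $v_t$; in particular we may assume $k\ge 1$, as otherwise there is no polynomial of negative degree and the statement is vacuous.)

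For such a monomial the summand factors completely, and distributing the product over the $d$ independent summations gives
\[
\sum_{i_1,\dots,i_d\ge 0}(-1)^{i_1+\dots+i_d}\binom{v_1}{i_1}\cdots\binom{v_d}{i_d}\,i_1^{a_1}\cdots i_d^{a_d}
=\prod_{t=1}^{d}\Bigl(\sum_{i_t=0}^{v_t}(-1)^{i_t}\binom{v_t}{i_t}i_t^{a_t}\Bigr).
\]
Now I would invoke a pigeonhole observation: if $a_t\ge v_t$ held for every $t$, then $a_1+\dots+a_d\ge v_1+\dots+v_d=k$, contradicting $\sum a_t<k$. Hence there is an index $t_0$ with $a_{t_0}<v_{t_0}$ (so $v_{t_0}\ge 1$). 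Applying the one-variable identity to the polynomial $x^{a_{t_0}}$, whose degree $a_{t_0}$ is strictly less than $v_{t_0}$, shows that the $t_0$-th factor on the right vanishes, so the whole product is $0$. By linearity the lemma follows for an arbitrary polynomial of degree less than $k$.

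The argument has essentially no serious obstacle; the one point that requires a moment's care is reading ``degree less than $k$'' as \emph{total} degree, so that the pigeonhole step applies — one cannot conclude that a single $a_t$ is small from a bound on, say, the degree in $x_1$ alone, which is precisely why reducing to monomials is cleaner than peeling off one variable at a time and inducting on $d$. Everything else is just the classical finite-difference fact $\sum_{j}(-1)^j\binom{n}{j}j^a=0$ for $a<n$, packaged one coordinate at a time.
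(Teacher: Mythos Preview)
Your proof is correct and is precisely the ``reduction to the one-variable case'' that the paper invokes without details: linearity reduces to monomials, the sum factors as a product of one-variable sums, and pigeonhole on the total-degree bound guarantees at least one factor vanishes by the classical identity. There is nothing to add.
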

This lemma, which can be proved by reduction to the one-variable case, is the base of our theorem below.

\bigskip
For a given $v=(v_1,v_2,\dots,v_d)$ with $0\le v_i\in\mathbb{Z}$, and such that $v\not=0$, define
\begin{equation}\label{eq:binomProducts}
 W_{m,\ell}(v)=\sum_{i\in \pi_d(m,\ell)}
 \!\! \binom{v_1}{i_1} \cdots \binom{v_d}{i_d},
\end{equation}
where $\pi_d(m,\ell)$ is the set of all $i=(i_1,i_2,\dots,i_d)\in\mathbb{N}_0^d$ such that
\begin{equation*}
i_1+\dots+i_d=\ell \;\text{ and }\; i_1+2i_2+\dots+di_d=m.
\end{equation*}
For those values of $(m,\ell)$ for which $\pi_d(m,\ell)$ is empty, we set $W_{m,\ell}(v)=0$. In particular, $W_{m,0}(v)=0$ unless $m=0$. 

\begin{theorem} \label{thm:binomial}
Let $v=(v_1,v_2,\dots,v_d)$ be any finite sequence of nonnegative integers with $v_1+\cdots+v_d=k>0$ and $v_1+2v_2+\cdots+dv_d=n$. Let $\alpha(\ell,m)$ be a polynomial in $\ell$ and $m$ of degree at most one. For any $\tau\in\mathbb{C}$, we have
\begin{align} 
\label{eq:th1a}
\sum_{\ell=0}^{k}\sum_{m=\ell}^n \frac{\alpha(k,n)}{\alpha(\ell,m)}\frac{\binom{\alpha(\ell,m)}{k-\ell}\binom{\tau-\alpha(\ell,m)}{\ell}}{\binom{k}{\ell}}W_{m,\ell}(v) &=\binom{\tau}{k}, \text{ and} \\
\label{eq:th1b}
\sum_{\ell=0}^{k}\sum_{m=\ell}^n \frac{\alpha(0,0)}{\alpha(\ell,m)}\frac{\binom{\tau-\alpha(\ell,m)}{k-\ell}\binom{\alpha(\ell,m)}{\ell}}{\binom{k}{\ell}}W_{m,\ell}(v) &=\binom{\tau}{k}.
\end{align}
\end{theorem}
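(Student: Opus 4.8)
The plan is to reduce both identities to Lemma~\ref{lem:identity0}. The key observation is that $W_{m,\ell}(v)$ is obtained from a product of binomial coefficients $\binom{v_1}{i_1}\cdots\binom{v_d}{i_d}$ by collecting terms according to the two statistics $\ell=i_1+\dots+i_d$ and $m=i_1+2i_2+\dots+di_d$. So I would first expand the left-hand side of \eqref{eq:th1a} by substituting the definition \eqref{eq:binomProducts}, turning the double sum over $(\ell,m)$ into a single sum over all $i=(i_1,\dots,i_d)\in\mathbb{N}_0^d$ (subject to $0\le i_j\le v_j$, automatically enforced by the binomial coefficients), with summand
\[
\frac{\alpha(k,n)}{\alpha(\ell,m)}\frac{\binom{\alpha(\ell,m)}{k-\ell}\binom{\tau-\alpha(\ell,m)}{\ell}}{\binom{k}{\ell}}\binom{v_1}{i_1}\cdots\binom{v_d}{i_d},
\]
where $\ell=\ell(i)$ and $m=m(i)$ are the linear forms above. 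Since $\alpha$ has degree at most one, $\alpha(\ell(i),m(i))$ is an affine function of $i_1,\dots,i_d$; write it as $A(i)$. The goal is then to show this sum equals $\binom{\tau}{k}$.

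The natural next step is a sign-reversing / telescoping manipulation using the factor $\frac{1}{A(i)}\binom{A(i)}{k-\ell}=\frac{1}{k-\ell}\binom{A(i)-1}{k-\ell-1}$ together with the Vandermonde-type absorption $\binom{A(i)-1}{k-\ell-1}\binom{\tau-A(i)}{\ell}$. I expect the cleanest route is to recognize the whole summand, after the rewriting $\frac{\alpha(k,n)}{\binom{k}{\ell}}\cdot\frac{1}{k-\ell}\binom{A(i)-1}{k-\ell-1}\binom{\tau-A(i)}{\ell}$, as (up to the known constant $\binom{\tau}{k}$) a value $P(i)$ of a polynomial in $i_1,\dots,i_d$ of degree less than $k=v_1+\dots+v_d$, plus a correction that itself expands against $\prod\binom{v_j}{i_j}$ to $\binom{\tau}{k}$. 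Concretely: I would argue that
\[
\sum_{i}\Big(\text{summand}\Big)-\binom{\tau}{k}=\sum_{i}(-1)^{i_1+\dots+i_d}\binom{v_1}{i_1}\cdots\binom{v_d}{i_d}P(i_1,\dots,i_d)
\]
for a suitable polynomial $P$ of degree $<k$ (absorbing the sign $(-1)^{i_1+\dots+i_d}$ into the upper-index manipulations of the binomial coefficients, which is where the alternating structure of Lemma~\ref{lem:identity0} enters), and then Lemma~\ref{lem:identity0} kills the right-hand side. The base case $\tau$ a nonnegative integer (where everything is a genuine finite combinatorial count) can be used to pin down the constant, and the general case follows because both sides are polynomials in $\tau$.

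For \eqref{eq:th1b}, the argument is parallel: the roles of $k-\ell$ and $\ell$ in the two binomial coefficients are swapped, and the prefactor $\alpha(0,0)$ replaces $\alpha(k,n)$, which corresponds to normalizing at the $i=0$ end of the telescoping rather than the $i=v$ end; alternatively \eqref{eq:th1b} can be derived from \eqref{eq:th1a} by the substitution $i_j\mapsto v_j-i_j$, which swaps $\ell\leftrightarrow k-\ell$, $m\leftrightarrow n-m$, and hence $\alpha(\ell,m)\leftrightarrow\alpha(k-\ell,n-m)$; applying \eqref{eq:th1a} to the affine polynomial $\tilde\alpha(\ell,m)=\tau-\alpha(k-\ell,n-m)$ should yield \eqref{eq:th1b} after simplification. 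The main obstacle I anticipate is identifying the exact polynomial $P$ of degree $<k$ and verifying the degree bound: the degree in $i$ of $\binom{A(i)-1}{k-\ell-1}\binom{\tau-A(i)}{\ell}$ is $(k-\ell-1)+\ell=k-1$ generically, but one must check that the division by $\binom{k}{\ell}$ and the factor $k-\ell$ do not spoil polynomiality in $i$ — this requires the careful bookkeeping that $\binom{k}{\ell}(k-\ell)=k\binom{k-1}{\ell}$ and a corresponding factorization on the numerator side. Getting that cancellation to work uniformly in $\ell$, so that a single polynomial identity in $i$ emerges, is the crux.
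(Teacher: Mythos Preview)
Your framework is right — unpack $W_{m,\ell}(v)$, bring in the $(-1)^\ell$ via upper negation, invoke Lemma~\ref{lem:identity0}, then extend to all $\tau$ by a polynomial argument — and this is exactly the paper's route. But the step you flag as ``the crux'' is a genuine gap, and your proposed manipulation does not close it.

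The difficulty is real: for a \emph{generic} $\tau$, after writing
\[
\frac{1}{\alpha}\frac{\binom{\alpha}{k-\ell}\binom{\tau-\alpha}{\ell}}{\binom{k}{\ell}}
=\frac{(-1)^\ell}{k!}\,(\alpha-1)\cdots(\alpha-k+\ell+1)\,(\alpha-\tau)\cdots(\alpha-\tau+\ell-1),
\]
the \emph{pattern} of linear factors depends on $\ell$, so there is no obvious single polynomial $P(i_1,\dots,i_d)$ of degree $<k$ that represents the summand across all slices $\sum i_j=\ell$. Your idea of absorbing $1/(k-\ell)$ and $1/\binom{k}{\ell}$ does not remove this $\ell$-dependence. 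Likewise, you cannot produce $\binom{\tau}{k}$ as an alternating $\prod\binom{v_j}{i_j}$-weighted sum of a low-degree polynomial (that sum would vanish), so subtracting $\binom{\tau}{k}$ and hoping for a degree-$(<k)$ remainder is not the way in.

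The paper's decisive move is to \emph{first restrict} to integers $\tau\in\{0,1,\dots,k-1\}$, where the right-hand side $\binom{\tau}{k}$ is already $0$. For such $\tau$ one has the exact rewriting
\[
\frac{1}{\alpha}\frac{\binom{\alpha}{k-\ell}\binom{\tau-\alpha}{\ell}}{\binom{k}{\ell}}
=(-1)^\ell\,\frac{\tau!\,(k-\tau-1)!}{k!}\,\binom{\alpha-1}{\tau}\binom{\alpha+\ell-\tau-1}{\,k-\tau-1\,}.
\]
Now the lower indices $\tau$ and $k-\tau-1$ are \emph{fixed} nonnegative integers independent of $\ell$, so the right-hand factor is manifestly a polynomial in $(\ell,m)$ (hence in $i_1,\dots,i_d$) of degree $\tau+(k-\tau-1)=k-1$, and Lemma~\ref{lem:identity0} applies directly, yielding LHS $=0$ for these $k$ values of $\tau$. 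Since both sides of \eqref{eq:th1a} are degree-$k$ polynomials in $\tau$ with the same $k$ roots $0,\dots,k-1$ and the same leading coefficient $1/k!$ (only the $\ell=k$, $m=n$ term contributes $\tau^k$), equality for all $\tau$ follows. Identity \eqref{eq:th1b} is then obtained by the identical argument with $\alpha$ and $\tau-\alpha$ interchanged; your proposed derivation of \eqref{eq:th1b} from \eqref{eq:th1a} via $i_j\mapsto v_j-i_j$ or $\tilde\alpha=\tau-\alpha(k-\ell,n-m)$ does not land on \eqref{eq:th1b} as stated — it reproduces \eqref{eq:th1a} or a variant with the wrong normalizing constant.
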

\begin{proof}
For simplicity, we will write $\alpha=\alpha(\ell,m)$. At first, we assume $\tau$ to be an integer with $0\le \tau \le k-1$. For such $\tau$ we have
\begin{align*}
\frac{1}{\alpha}\frac{\binom{\alpha}{k-\ell}\binom{\tau-\alpha}{\ell}}{\binom{k}{\ell}}
&= \frac{(-1)^\ell}{\alpha}\frac{\binom{\alpha}{k-\ell}\binom{\alpha-\tau+\ell-1}{\ell}}{\binom{k}{\ell}}\\
&= \frac{(-1)^\ell\, \tau!(k-\tau-1)!}{k!} \binom{\alpha-1}{\tau}\binom{\alpha-\tau+\ell-1}{k-\tau-1},
\end{align*}
and so
\begin{align*}
\sum_{\ell=0}^{k}\sum_{m=\ell}^n & \;\frac{1}{\alpha}\frac{\binom{\alpha}{k-\ell}\binom{\tau-\alpha}{\ell}}{\binom{k}{\ell}}W_{m,\ell}(v) \\
&= \sum_{\ell=0}^{k}\sum_{m=\ell}^n \frac{(-1)^\ell\, \tau!(k-\tau-1)!}{k!} \binom{\alpha-1}{\tau}\binom{\alpha-\tau+\ell-1}{k-\tau-1} W_{m,\ell}(v) \\
&= \frac{\tau!(k-\tau-1)!}{k!} \sum_{\ell=0}^{k}\sum_{m=\ell}^n  (-1)^\ell\binom{\alpha-1}{\tau}\binom{\alpha-\tau+\ell-1}{k-\tau-1} W_{m,\ell}(v).
\end{align*}
Since $\alpha=\alpha(\ell,m)$ is a polynomial in $\ell$ and $m$ of degree at most one, the term $\binom{\alpha-1}{\tau}\binom{\alpha-\tau+\ell-1}{k-\tau-1}$ is a polynomial in $\ell$ and $m$ of degree at most $k-1$.  Hence by Lemma~\ref{lem:identity0}, 
\begin{equation*}
\sum_{\ell=0}^{k}\sum_{m=\ell}^n  (-1)^\ell\binom{\alpha-1}{\tau}\binom{\alpha-\tau+\ell-1}{k-\tau-1} W_{m,\ell}(v)=0,
\end{equation*}
and so
\begin{equation*}
\sum_{\ell=0}^{k}\sum_{m=\ell}^n \frac{\alpha(k,n)}{\alpha(\ell,m)}\frac{\binom{\alpha(\ell,m)}{k-\ell}\binom{\tau-\alpha(\ell,m)}{\ell}}{\binom{k}{\ell}}W_{m,\ell}(v)=0 \;\text{ for } \tau=0,1,\dots, k-1.
\end{equation*}
Now, both sides of equation \eqref{eq:th1a} are polynomials in $\tau$ of degree $k$, having the same $k$ roots $0,1,\dots,k-1$. The principal coefficient of $\binom{\tau}{k}$ is $\frac{1}{k!}$, and for the left hand side, the only term invoving $\tau^k$ is obtained when $\ell=k$ (impliying $m=n$), that is $\binom{\tau-\alpha(k,n)}{k}$. Thus its principal coefficient is also $\frac{1}{k!}$. This gives \eqref{eq:th1a} for every $\tau\in\mathbb{C}$. The identity \eqref{eq:th1b} can be verified using the same argument; this is left to the reader. 
\end{proof}

\begin{corollary}\label{cor:partialfractions}
Let $\alpha(\ell,m)$ be as in Theorem~\ref{thm:binomial}. For any $v\in\pi_d(n,k)$,
\begin{equation}\label{eq:th1c}
\sum_{\ell=0}^{k}\sum_{m=\ell}^n \frac{\tau\binom{\alpha(\ell,m)}{k-\ell}\binom{\tau-\alpha(\ell,m)}{\ell}}{\alpha(\ell,m)\big(\tau-\alpha(\ell,m)\big)\binom{k}{\ell}}W_{m,\ell}(v) 
=\frac{\tau-\alpha(0,0)+\alpha(k,n)}{\alpha(k,n)\big(\tau-\alpha(0,0)\big)}\binom{\tau}{k}.
\end{equation}
\end{corollary}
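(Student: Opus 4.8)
The plan is to split the summand of \eqref{eq:th1c} by partial fractions in $\alpha=\alpha(\ell,m)$ and to recognize the two resulting double sums as instances of \eqref{eq:th1a} and \eqref{eq:th1b}.

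Concretely, I would start from the elementary identity $\dfrac{\tau}{\alpha(\tau-\alpha)}=\dfrac{1}{\alpha}+\dfrac{1}{\tau-\alpha}$, which lets me rewrite the general term of the left-hand side of \eqref{eq:th1c} as
\[
\frac{\binom{\alpha(\ell,m)}{k-\ell}\binom{\tau-\alpha(\ell,m)}{\ell}}{\alpha(\ell,m)\binom{k}{\ell}}\,W_{m,\ell}(v)
\;+\;
\frac{\binom{\alpha(\ell,m)}{k-\ell}\binom{\tau-\alpha(\ell,m)}{\ell}}{\bigl(\tau-\alpha(\ell,m)\bigr)\binom{k}{\ell}}\,W_{m,\ell}(v).
\]
Summing the first term over $\ell$ and $m$ and multiplying by $\alpha(k,n)$ is exactly the left-hand side of \eqref{eq:th1a}; hence that part contributes $\frac{1}{\alpha(k,n)}\binom{\tau}{k}$ to \eqref{eq:th1c}.

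For the second term the key point is that $\beta(\ell,m):=\tau-\alpha(\ell,m)$ is again a polynomial in $\ell$ and $m$ of degree at most one, with constant term $\beta(0,0)=\tau-\alpha(0,0)$. Substituting $\beta$ for $\alpha$ in \eqref{eq:th1b} turns the factor $\binom{\tau-\alpha(\ell,m)}{k-\ell}$ into $\binom{\alpha(\ell,m)}{k-\ell}$, the factor $\binom{\alpha(\ell,m)}{\ell}$ into $\binom{\tau-\alpha(\ell,m)}{\ell}$, and the denominator $\alpha(\ell,m)$ into $\tau-\alpha(\ell,m)$, so \eqref{eq:th1b} reads
\[
\sum_{\ell=0}^{k}\sum_{m=\ell}^n \frac{\tau-\alpha(0,0)}{\tau-\alpha(\ell,m)}\,\frac{\binom{\alpha(\ell,m)}{k-\ell}\binom{\tau-\alpha(\ell,m)}{\ell}}{\binom{k}{\ell}}\,W_{m,\ell}(v)=\binom{\tau}{k}.
\]
In other words, the second term contributes $\frac{1}{\tau-\alpha(0,0)}\binom{\tau}{k}$.

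Adding the two contributions gives
\[
\left(\frac{1}{\alpha(k,n)}+\frac{1}{\tau-\alpha(0,0)}\right)\binom{\tau}{k}
=\frac{\tau-\alpha(0,0)+\alpha(k,n)}{\alpha(k,n)\bigl(\tau-\alpha(0,0)\bigr)}\binom{\tau}{k},
\]
which is the asserted identity. I do not expect a genuine obstacle here; the only point requiring care is the bookkeeping in the substitution $\alpha\mapsto\tau-\alpha$ (it interchanges the two binomial factors as well as the denominator, and replaces the leading constant $\alpha(0,0)$ by $\tau-\alpha(0,0)$), and, as in Theorem~\ref{thm:binomial}, the degenerate parameter values for which some $\alpha(\ell,m)$ or $\tau-\alpha(\ell,m)$ vanishes are excluded by the standing convention.
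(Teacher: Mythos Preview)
Your proof is correct and follows exactly the same route as the paper: the paper likewise replaces $\alpha$ by $\tau-\alpha$ in \eqref{eq:th1b} to obtain the displayed identity you write, and then combines it with \eqref{eq:th1a}. You have simply made explicit the partial-fraction step $\tfrac{\tau}{\alpha(\tau-\alpha)}=\tfrac{1}{\alpha}+\tfrac{1}{\tau-\alpha}$ that the paper leaves implicit in the phrase ``this identity together with \eqref{eq:th1a} give the claimed formula.''
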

\begin{proof}
Replacing $\alpha$ by $\tau-\alpha$ in \eqref{eq:th1b}, we obtain
\begin{equation*}
\sum_{\ell=0}^{k}\sum_{m=\ell}^n \frac{\tau-\alpha(0,0)}{\tau-\alpha(\ell,m)}\frac{\binom{\alpha(\ell,m)}{k-\ell}\binom{\tau-\alpha(\ell,m)}{\ell}}{\binom{k}{\ell}}W_{m,\ell}(v) =\binom{\tau}{k}.
\end{equation*}
This identity together with \eqref{eq:th1a} give the claimed formula.
\end{proof}

In the case when $\alpha(\ell,m)=\alpha(\ell)$, the left-hand side of \eqref{eq:th1b} can be written as 
\begin{equation*}
\sum_{\ell=0}^{k}\sum_{m=\ell}^n \frac{\alpha(0,0)}{\alpha(\ell,m)}\frac{\binom{\tau-\alpha(\ell,m)}{k-\ell}\binom{\alpha(\ell,m)}{\ell}}{\binom{k}{\ell}} W_{m,\ell}(v)
= \sum_{\ell=0}^{k} \frac{\alpha(0)}{\alpha(\ell)} \binom{\tau-\alpha(\ell)}{k-\ell}\binom{\alpha(\ell)}{\ell}
\end{equation*}
since $\sum_{m=\ell}^n W_{m,\ell}(v)=\binom{k}{\ell}$ for any $v\in\pi_d(n,k)$. Thus \eqref{eq:th1b} becomes
\begin{equation}\label{eq:th1bL}
\sum_{\ell=0}^{k} \frac{\alpha(0)}{\alpha(\ell)} \binom{\tau-\alpha(\ell)}{k-\ell}\binom{\alpha(\ell)}{\ell} = \binom{\tau}{k},
\end{equation}
and similarly, \eqref{eq:th1c} turns into
\begin{equation}\label{eq:th1cL}
 \sum_{\ell=0}^{k} \frac{\tau\binom{\alpha(\ell)}{k-\ell}\binom{\tau-\alpha(\ell)}{\ell}}{\alpha(\ell)(\tau-\alpha(\ell))} = \frac{\tau-\alpha(0)+\alpha(k)}{\alpha(k)\big(\tau-\alpha(0)\big)}\binom{\tau}{k}.
\end{equation}

\begin{example}
With appropriate choices of $\tau$ and $\alpha(\ell)$, the above equations give the well-known Hagen--Rothe identities. For instance, with $\tau=x+y+kz$ and $\alpha(\ell)=y+(k-\ell)z$, the identity \eqref{eq:th1cL} gives 
\begin{equation*}
 \sum_{\ell=0}^{k} \frac{x}{x+\ell z} \binom{x+\ell z}{\ell} \frac{y}{y+(k-\ell)z} \binom{y+(k-\ell)z}{k-\ell} = \frac{x+y}{x+y+kz}\binom{x+y+kz}{k},
\end{equation*}
and with $\tau=x+y+kz$ and $\alpha(\ell)=x+\ell z$, identity \eqref{eq:th1bL} leads to
\begin{equation*}
 \sum_{\ell=0}^{k} \frac{x}{x+\ell z} \binom{x+\ell z}{\ell} \binom{y+(k-\ell)z}{k-\ell} = \binom{x+y+kz}{k}.
\end{equation*}
The special case when $z=0$ is known as the Chu--Vandermonde identity:
\begin{equation*}
\sum_{\ell=0}^{k} \binom{x}{\ell}\binom{y}{k-\ell}=\binom{x+y}{k}.
\end{equation*}
\end{example}

\begin{example}
For $\tau=-1$, identity \eqref{eq:th1b} gives
\begin{equation*}
\sum_{\ell=0}^{k}\sum_{m=\ell}^n \frac{\alpha(0,0)}{\alpha(\ell,m)}\frac{\binom{-1-\alpha(\ell,m)}{k-\ell}\binom{\alpha(\ell,m)}{\ell}}{\binom{k}{\ell}}W_{m,\ell}(v) =(-1)^{k}.
\end{equation*}
Now, since
\begin{equation*}
 \binom{-1-\alpha(\ell,m)}{k-\ell}\binom{\alpha(\ell,m)}{\ell} 
 = (-1)^{k-\ell} \binom{\alpha(\ell,m)+k-\ell}{k}\binom{k}{\ell},
\end{equation*}
we then get
\begin{equation}\label{eq:negativeOne}
\sum_{\ell=0}^{k}\sum_{m=\ell}^n (-1)^{\ell} \frac{\alpha(0,0)}{\alpha(\ell,m)} \binom{\alpha(\ell,m)+k-\ell}{k} W_{m,\ell}(v) = 1.
\end{equation}
Choosing $\alpha(\ell,m)=z-k+\ell$ with $z>k$, and since $\sum_{m=\ell}^n W_{m,\ell}(v)=\binom{k}{\ell}$, we get
\begin{equation*}
1 = \sum_{\ell=0}^{k} (-1)^{\ell} \frac{z-k}{z-k+\ell}\binom{z}{k}\binom{k}{\ell}
=\binom{z}{k} \sum_{\ell=0}^{k} (-1)^{\ell-1} \binom{k}{\ell} \frac{\ell}{z-k+\ell}.
\end{equation*}
In other words, we recover the known formula
\begin{equation*}
\frac{1}{\binom{z}{k}} = \sum_{\ell=1}^{k} (-1)^{\ell-1} \binom{k}{\ell} \frac{\ell}{z-k+\ell}.
\end{equation*}
\end{example}

\begin{example}
Let $0\le z\le k$ and $\gamma\in\mathbb{N}$. Choose $v=(v_1,v_2,\dots)$ such that $v_{\gamma}=z$, $v_{\gamma+1}=k-z$, and $v_j=0$ for every other $j$. Then
\begin{equation*}
W_{m,\ell}(v)=\binom{z}{(\gamma+1)\ell-m} \binom{k-z}{m-\gamma\ell}
\end{equation*}
and \eqref{eq:negativeOne} turns into
\begin{equation*}
\sum_{\ell=0}^{k}\sum_{m=\gamma\ell}^{(\gamma+1)\ell} (-1)^{\ell} \frac{\alpha(0,0)}{\alpha(\ell,m)} \binom{\alpha(\ell,m)+k-\ell}{k} \binom{z}{(\gamma+1)\ell-m} \binom{k-z}{m-\gamma\ell} = 1.
\end{equation*}
Other identities can be obtained by choosing a different $v$. 
\end{example}

\bigskip
We finish this section with a simple and straightforward abstraction of Theorem~\ref{thm:binomial}.
\begin{theorem}\label{thm:generalBinomial}
Let $v=(v_1,v_2,\dots,v_d)$ be any finite sequence of nonnegative integers with $v_1+\cdots+v_d=k>0$ and $v_1+2v_2+\cdots+dv_d=n$. Let $p_{m,\ell}(\tau)$ be polynomials in $\tau$ of degree at most $\ell$, and such that for every $\tau_0\in\{0,1,\dots,k-1\}$, the function $(m,\ell)\mapsto p_{m,\ell}(\tau_0)$ is a polynomial in $m$ and $\ell$ of degree at most $k-1$. If $p_{n,k}(\tau)$ has degree $k$ in $\tau$, then
\begin{equation*} 
\sum_{\ell=0}^{k}\sum_{m=\ell}^n \frac{(-1)^\ell}{k!}p_{m,\ell}(\tau) W_{m,\ell}(v) =\gamma_k\binom{\tau}{k}, 
\end{equation*}
where $\gamma_k$ is the coefficient of $\tau^k$ in $(-1)^k p_{n,k}(\tau)$.
\end{theorem}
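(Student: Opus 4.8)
The plan is to run, in abstract form, the very argument already used for Theorem~\ref{thm:binomial}. Write $F(\tau)$ for the left-hand side, $F(\tau)=\sum_{\ell=0}^{k}\sum_{m=\ell}^{n}\frac{(-1)^\ell}{k!}p_{m,\ell}(\tau)W_{m,\ell}(v)$, and note first that since each $p_{m,\ell}$ has degree at most $\ell\le k$ in $\tau$, $F$ is a polynomial in $\tau$ of degree at most $k$, as is $\gamma_k\binom{\tau}{k}$. So it suffices to show that $F$ and $\gamma_k\binom{\tau}{k}$ agree at $k+1$ points: I will show they both vanish at $\tau=0,1,\dots,k-1$ and then match their coefficients of $\tau^k$.

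For the vanishing, fix $\tau_0\in\{0,1,\dots,k-1\}$ and expand each $W_{m,\ell}(v)$ by its definition \eqref{eq:binomProducts} as $\sum_{i\in\pi_d(m,\ell)}\binom{v_1}{i_1}\cdots\binom{v_d}{i_d}$. On $\pi_d(m,\ell)$ we have $(-1)^\ell=(-1)^{i_1+\cdots+i_d}$, and as $(m,\ell)$ ranges with $i$ ranging over $\pi_d(m,\ell)$ each $i=(i_1,\dots,i_d)\in\mathbb{N}_0^d$ occurs exactly once (with $\ell=i_1+\cdots+i_d$ and $m=i_1+2i_2+\cdots+di_d$), so the nested sum collapses to
\[
k!\,F(\tau_0)=\sum_{i_1,\dots,i_d\ge 0}(-1)^{i_1+\cdots+i_d}\binom{v_1}{i_1}\cdots\binom{v_d}{i_d}\,P(i_1,\dots,i_d),
\]
where $P(i_1,\dots,i_d)=p_{\,i_1+2i_2+\cdots+di_d,\;i_1+\cdots+i_d}(\tau_0)$. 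Here $\ell$ and $m$ are linear forms in the $i_j$, and by hypothesis $(m,\ell)\mapsto p_{m,\ell}(\tau_0)$ is a polynomial of degree at most $k-1$; composing with a linear change of variables cannot raise the degree, so $P$ is a polynomial in $i_1,\dots,i_d$ of degree at most $k-1<v_1+\cdots+v_d$. Lemma~\ref{lem:identity0} then gives $F(\tau_0)=0$. For the leading term, a summand with $\ell<k$ contributes degree $<k$ in $\tau$, while for $\ell=k$ the coefficient $W_{m,k}(v)$ is nonzero only when $i_1+\cdots+i_d=k=v_1+\cdots+v_d$ with $i_j\le v_j$, forcing $i_j=v_j$, $m=n$, and $W_{n,k}(v)=1$; hence the $\tau^k$-part of $F$ equals that of $\frac{(-1)^k}{k!}p_{n,k}(\tau)$. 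Since $\gamma_k$ is the coefficient of $\tau^k$ in $(-1)^k p_{n,k}(\tau)$, the coefficient of $\tau^k$ in $F$ is $\frac{(-1)^k}{k!}\cdot(-1)^k\gamma_k=\frac{\gamma_k}{k!}$, which is exactly the coefficient of $\tau^k$ in $\gamma_k\binom{\tau}{k}$. Two polynomials in $\tau$ of degree at most $k$ that share the $k$ roots $0,1,\dots,k-1$ and the same coefficient of $\tau^k$ are equal, so $F(\tau)=\gamma_k\binom{\tau}{k}$ for all $\tau\in\mathbb{C}$.

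I do not expect a genuine obstacle here, consistent with the theorem being billed as a ``straightforward abstraction'': the only real content is the reindexing that turns $\sum_{(m,\ell)}\sum_{i\in\pi_d(m,\ell)}$ into $\sum_{i\in\mathbb{N}_0^d}$ and the remark that a degree-$(\le k-1)$ polynomial in $(m,\ell)$ stays of degree $\le k-1$ after the linear substitution $(i)\mapsto(\ell,m)$, after which Lemma~\ref{lem:identity0} does the work. The points needing a little care are purely bookkeeping: the bound $\deg_\tau F\le k$ coming from $\deg_\tau p_{m,\ell}\le\ell$, and the isolation of the top-degree term via $W_{m,k}(v)=\delta_{m,n}$. (The hypothesis that $p_{n,k}$ has degree $k$ just ensures $\gamma_k\neq0$; when $\gamma_k=0$ the identity still holds, both sides then being the zero polynomial by the same vanishing-at-$k$-points argument.)
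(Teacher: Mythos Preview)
Your proof is correct and is precisely the abstraction the paper has in mind: the paper does not spell out a separate proof of Theorem~\ref{thm:generalBinomial} but presents it as a ``simple and straightforward abstraction'' of the argument for Theorem~\ref{thm:binomial}, and your write-up carries out exactly that---vanishing at $\tau=0,\dots,k-1$ via Lemma~\ref{lem:identity0} after the reindexing $\sum_{m,\ell}\sum_{i\in\pi_d(m,\ell)}\to\sum_{i\in\mathbb{N}_0^d}$, then matching the $\tau^k$-coefficient using $W_{m,k}(v)=\delta_{m,n}$.
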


For example, in equation \eqref{eq:th1a} we have 
\begin{gather*}
p_{m,\ell}(\tau) = (-1)^\ell k! \frac{\alpha(k,n)}{\alpha(\ell,m)}\frac{\binom{\alpha(\ell,m)}{k-\ell}\binom{\tau-\alpha(\ell,m)}{\ell}}{\binom{k}{\ell}},
\intertext{which for $\tau_0\in \{0,1,\dots,k-1\}$ can be written (cf. proof of Thm.~\ref{thm:binomial}) as} 
p_{m,\ell}(\tau_0) = \tau!(k-\tau_0-1)!\, \alpha(k,n)\binom{\alpha(\ell,m)-1}{\tau_0}\binom{\alpha(\ell,m)-\tau_0+\ell-1}{k-\tau_0-1}.
\end{gather*}
Recall that $\alpha(\ell,m)$ is a polynomial in $\ell$ and $m$ of degree at most one, thus $(m,\ell)\mapsto p_{m,\ell}(\tau_0)$ is a polynomial in $\ell$ and $m$ of degree at most $k-1$.

\section{Identities for partial Bell polynomials}
\label{sec:BellPolynomials}

In this section we derive some convolution formulas for the partial Bell polynomials:
\begin{align*}
B_{n,k}(x_1,x_2,\dots) =\sum_{i\in\pi(n,k)}\frac{n!}{i_1!i_2!\cdots}\left(\frac{x_1}{1!}\right)^{i_1}\left(\frac{x_2}{2!}\right)^{i_2}\dots,
\end{align*}
where $\pi(n,k)$ is the set of all sequences $i=(i_1,i_2,\dots)$ of nonnegative integers such that
\begin{equation*}
i_1+i_2+\dots=k \;\text{ and }\; i_1+2i_2+3i_3+\dots=n.
\end{equation*}
\begin{lemma}\label{lem:Bellproduct} 
For any $x=(x_1,x_2,\dots)$ and integers $0\le\ell\le k$ and $0\le m\le n$, we have
\begin{equation*}
\binom{n}{m} B_{m,\ell}(x) B_{n-m,k-\ell}(x)= \!\!\sum_{v\in\pi(n,k)}\frac{n!}{v_1!v_2!\dotsb}W_{m,\ell}(v)\left(\frac{x_1}{1!}\right)^{v_1}\left(\frac{x_2}{2!}\right)^{v_2}\cdots, 
\end{equation*}
where $W_{m,\ell}(v)$ is defined as in \eqref{eq:binomProducts}.
\end{lemma}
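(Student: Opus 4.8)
The plan is to expand the left-hand side directly from the definition of the partial Bell polynomials and collect terms according to the exponent vector $v$. First I would write
\begin{equation*}
B_{m,\ell}(x) = \sum_{j\in\pi(m,\ell)}\frac{m!}{j_1!j_2!\cdots}\Big(\frac{x_1}{1!}\Big)^{j_1}\Big(\frac{x_2}{2!}\Big)^{j_2}\cdots,\quad
B_{n-m,k-\ell}(x) = \sum_{r\in\pi(n-m,k-\ell)}\frac{(n-m)!}{r_1!r_2!\cdots}\Big(\frac{x_1}{1!}\Big)^{r_1}\Big(\frac{x_2}{2!}\Big)^{r_2}\cdots,
\end{equation*}
multiply the two sums, and multiply by $\binom{n}{m}=\frac{n!}{m!(n-m)!}$. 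The product of a term indexed by $j$ and a term indexed by $r$ contributes a monomial with exponent vector $v=j+r$, and the scalar prefactor becomes $n!\prod_t\frac{1}{j_t!\,r_t!}$. The key bookkeeping step is to fix $v$ and sum over all ways to split it as $v=j+r$ with $j\in\pi(m,\ell)$ and $r\in\pi(n-m,k-\ell)$; since $r=v-j$ is determined by $j$, and the constraints $r\in\pi(n-m,k-\ell)$ are automatic once $j\in\pi(m,\ell)$ and $v\in\pi(n,k)$ (because $|j|_1+|r|_1=|v|_1=k$ and the weighted sums add to $n$), the inner sum is over $j\in\mathbb{N}_0^\infty$ with $j\le v$ componentwise, $j_1+j_2+\dots=\ell$, and $j_1+2j_2+\dots=m$.

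Next I would factor the scalar: $n!\prod_t\frac{1}{j_t!(v_t-j_t)!}=\frac{n!}{v_1!v_2!\cdots}\prod_t\binom{v_t}{j_t}$. Summing over the admissible $j$ then produces exactly
\begin{equation*}
\frac{n!}{v_1!v_2!\cdots}\sum_{\substack{j_1+j_2+\dots=\ell\\ j_1+2j_2+\dots=m}}\prod_t\binom{v_t}{j_t}\;=\;\frac{n!}{v_1!v_2!\cdots}\,W_{m,\ell}(v),
\end{equation*}
by the very definition \eqref{eq:binomProducts} of $W_{m,\ell}$ (with $d$ taken large enough that all nonzero components of $v$ and $j$ are among the first $d$ coordinates — one should note the value of $W_{m,\ell}(v)$ is independent of padding $v$ with extra zeros, which is immediate since $\binom{0}{0}=1$). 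Collecting these contributions over all $v\in\pi(n,k)$ gives the claimed identity, and the monomial in $x$ attached to $v$ is precisely $\big(\tfrac{x_1}{1!}\big)^{v_1}\big(\tfrac{x_2}{2!}\big)^{v_2}\cdots$, which matches the stated right-hand side.

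The only mild subtlety — and the step I'd be most careful about — is the reindexing that merges the double sum over $(j,r)$ into a single sum over $v$ with an inner sum over splittings: one must check that the index set on the right is genuinely $\pi(n,k)$ (no spurious $v$ appear, and none are missed) and that for such $v$ the inner sum over $j$ is exactly $\pi_d(m,\ell)$ intersected with $\{j:j\le v\}$, which is what $W_{m,\ell}(v)$ counts once we remember that $\binom{v_t}{j_t}=0$ when $j_t>v_t$, so the constraint $j\le v$ can be dropped and the sum taken over all of $\pi_d(m,\ell)$. After that observation the identity is a routine comparison of coefficients, with no analytic input required.
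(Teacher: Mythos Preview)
Your proposal is correct and follows essentially the same approach as the paper: expand both Bell polynomials, multiply, reindex by the total exponent vector $v=j+r$, and factor the resulting coefficient as $\tfrac{n!}{v_1!v_2!\cdots}\prod_t\binom{v_t}{j_t}$ to recognize $W_{m,\ell}(v)$. The paper's proof is slightly more compact (it introduces $z_j=x_j/j!$ and multinomial shorthand), but the argument is the same, and your extra care about the constraint $j\le v$ and the zero-padding is a welcome clarification.
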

\begin{proof}
This is a straightforward consequence of the definitions. For simplicity, we let $z_j=\frac{x_j}{j!}$ and use the convenient multinomial notation
\begin{equation*}
 \binom{n}{v}=\frac{n!}{v_1!v_2!\dotsb}, \quad z^v = z_1^{v_1}z_2^{v_2}\dots, \text{ and so on.}
\end{equation*}
Thus
\begin{align*}
\binom{n}{m} B_{m,\ell}(x) & B_{n-m,k-\ell}(x)  \\
&= \binom{n}{m} \left(\sum_{i\in\pi(m,\ell)}\binom{m}{i}z^i \right)\left(\sum_{j\in\pi(n-m,k-\ell)}\binom{n-m}{j}z^j \right)\\
&=\binom{n}{m} \sum_{v\in\pi(n,k)}\sum_{i\in\pi(m,\ell)} \binom{m}{i}\binom{n-m}{v-i}z^{v} 
\\[-3ex] \intertext{}
&= \sum_{v\in\pi(n,k)} \frac{n!}{v_1!v_2!\cdots} \left(\sum_{i\in\pi(m,\ell)}\binom{v_1}{i_1}\binom{v_2}{i_2}\cdots\right) z^v \\
&= \sum_{v\in\pi(n,k)} \frac{n!}{v_1!v_2!\cdots} W_{m,\ell}(v) \left(\frac{x_1}{1!}\right)^{v_1}\left(\frac{x_2}{2!}\right)^{v_2}\cdots
\end{align*}
\end{proof}
 
As a consequence of Theorem~\ref{thm:generalBinomial} and the previous lemma, we obtain the following convolution formula:

\begin{theorem}\label{thm:BellConvolution} 
Let $p_{m,\ell}(\tau)$ be polynomials in $\tau$ with $\deg(p_{m,\ell})\le\ell$, and such that for every $\tau_0\in\{0,1,\dots,k-1\}$, the function $(m,\ell)\mapsto p_{m,\ell}(\tau_0)$ is a polynomial in $m$ and $\ell$ of degree at most $k-1$. If $p_{n,k}(\tau)$ has degree $k$ in $\tau$, then for any sequence $x=(x_1,x_2,\dots)$,
\begin{equation*} 
\sum_{\ell=0}^{k}\sum_{m=\ell}^n \frac{(-1)^\ell}{k!}p_{m,\ell}(\tau)\binom{n}{m} B_{m,\ell}(x) B_{n-m,k-\ell}(x) = \gamma_k\binom{\tau}{k} B_{n,k}(x), 
\end{equation*}
where $\gamma_k$ is the coefficient of $\tau^k$ in $(-1)^k p_{n,k}(\tau)$.
\end{theorem}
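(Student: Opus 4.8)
The plan is to reduce the identity to Theorem~\ref{thm:generalBinomial} by expanding each product of Bell polynomials through Lemma~\ref{lem:Bellproduct} and then interchanging the order of summation.

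First I would apply Lemma~\ref{lem:Bellproduct} to every summand on the left-hand side, rewriting $\binom{n}{m}B_{m,\ell}(x)B_{n-m,k-\ell}(x)$ as $\sum_{v\in\pi(n,k)}\frac{n!}{v_1!v_2!\cdots}W_{m,\ell}(v)\bigl(\frac{x_1}{1!}\bigr)^{v_1}\bigl(\frac{x_2}{2!}\bigr)^{v_2}\cdots$. Since every sum involved is finite, I can then swap the double sum over $(\ell,m)$ with the sum over $v\in\pi(n,k)$, which brings the left-hand side to the form
\[
\sum_{v\in\pi(n,k)}\frac{n!}{v_1!v_2!\cdots}\Bigl(\tfrac{x_1}{1!}\Bigr)^{v_1}\Bigl(\tfrac{x_2}{2!}\Bigr)^{v_2}\cdots
\left(\sum_{\ell=0}^{k}\sum_{m=\ell}^{n}\tfrac{(-1)^\ell}{k!}\,p_{m,\ell}(\tau)\,W_{m,\ell}(v)\right).
\]

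Next I would note that for each fixed $v\in\pi(n,k)$ the tuple $v=(v_1,v_2,\dots)$ is a finite sequence of nonnegative integers (indeed $v_j=0$ for $j>n-k+1$) with $v_1+\cdots=k>0$ and $v_1+2v_2+\cdots=n$, so the inner parenthesized sum is exactly the left-hand side of Theorem~\ref{thm:generalBinomial}. The hypotheses on the polynomials $p_{m,\ell}$ are precisely those required there, so this inner sum equals $\gamma_k\binom{\tau}{k}$ with $\gamma_k$ the coefficient of $\tau^k$ in $(-1)^kp_{n,k}(\tau)$ — a quantity that does not depend on $v$.

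Finally I would pull the constant $\gamma_k\binom{\tau}{k}$ out of the outer sum; what is left over is $\sum_{v\in\pi(n,k)}\frac{n!}{v_1!v_2!\cdots}\bigl(\frac{x_1}{1!}\bigr)^{v_1}\bigl(\frac{x_2}{2!}\bigr)^{v_2}\cdots$, which is $B_{n,k}(x)$ by definition, giving the claimed formula. I do not anticipate a genuine obstacle here: the proof is essentially an exercise in reorganizing finite sums, and the only points deserving a line of care are checking that the hypotheses of Theorem~\ref{thm:generalBinomial} transfer verbatim to each $v\in\pi(n,k)$ and that $\gamma_k$ is indeed independent of $v$ (it is, being determined by $p_{n,k}$ alone), so that it can legitimately be factored out of the sum over $v$.
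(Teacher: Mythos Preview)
Your proposal is correct and follows essentially the same approach as the paper: apply Lemma~\ref{lem:Bellproduct}, interchange the finite sums, invoke Theorem~\ref{thm:generalBinomial} for each fixed $v\in\pi(n,k)$, and then recognize the remaining sum as $B_{n,k}(x)$. The only cosmetic difference is that the paper divides through by $\gamma_k$ at the outset before collapsing the inner sum to $\binom{\tau}{k}$, whereas you keep $\gamma_k$ on the right-hand side throughout.
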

\begin{proof}
By Lemma~\ref{lem:Bellproduct}, we have
\begin{align*}
\sum_{\ell=0}^{k}\sum_{m=\ell}^n &\, \frac{(-1)^\ell}{k!\gamma_k} p_{m,\ell}(\tau)\binom{n}{m} B_{m,\ell}(x) B_{n-m,k-\ell}(x) \\
&= \sum_{\ell=0}^{k}\sum_{m=\ell}^n \frac{(-1)^\ell}{k!\gamma_k}p_{m,\ell}(\tau) \sum_{v\in\pi(n,k)}\frac{n!}{v_1!v_2!\dotsb}W_{m,\ell}(v)\left(\frac{x_1}{1!}\right)^{v_1}\!\left(\frac{x_2}{2!}\right)^{v_2}\cdots \\
&= \! \sum_{v\in\pi(n,k)} \frac{n!}{v_1!v_2!\cdots} \left(\sum_{\ell=0}^{k}\sum_{m=\ell}^n \frac{(-1)^\ell}{k!\gamma_k}p_{m,\ell}(\tau) W_{m,\ell}(v)\right) \left(\frac{x_1}{1!}\right)^{v_1}\!\left(\frac{x_2}{2!}\right)^{v_2}\cdots
\intertext{which by Theorem~\ref{thm:generalBinomial} becomes}
&= \binom{\tau}{k} \sum_{v\in\pi(n,k)} \frac{n!}{v_1!v_2!\cdots} \left(\frac{x_1}{1!}\right)^{v_1}\!\left(\frac{x_2}{2!}\right)^{v_2}\cdots = \binom{\tau}{k} B_{n,k}(x). 
\end{align*} 
\end{proof}

For the special cases in Theorem~\ref{thm:binomial}, we obtain:
\begin{corollary}\label{cor:basicConvolution}
Let $\alpha(\ell,m)$ be a polynomial in $\ell$ and $m$ of degree at most one. For any sequence $x=(x_1,x_2,\dots)$ and any $\tau\in\mathbb{C}$, we have
\begin{align*} 
\sum_{\ell=0}^{k}\sum_{m=\ell}^n \frac{\alpha(k,n)}{\alpha(\ell,m)}\frac{\binom{\alpha(\ell,m)}{k-\ell}\binom{\tau-\alpha(\ell,m)}{\ell}\binom{n}{m}}{\binom{k}{\ell}} B_{m,\ell}(x) B_{n-m,k-\ell}(x) &= \binom{\tau}{k} B_{n,k}(x), \\
\sum_{\ell=0}^{k}\sum_{m=\ell}^n \frac{\alpha(0,0)}{\alpha(\ell,m)}\frac{\binom{\tau-\alpha(\ell,m)}{k-\ell}\binom{\alpha(\ell,m)}{\ell}\binom{n}{m}}{\binom{k}{\ell}} B_{m,\ell}(x) B_{n-m,k-\ell}(x) &= \binom{\tau}{k} B_{n,k}(x).
\end{align*}
\end{corollary}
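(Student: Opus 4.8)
The plan is to run the argument of Theorem~\ref{thm:BellConvolution} with the two special coefficient systems supplied by Theorem~\ref{thm:binomial}. Both claimed identities have the form
\[
\sum_{\ell=0}^{k}\sum_{m=\ell}^n c_{m,\ell}(\tau)\,\binom{n}{m} B_{m,\ell}(x)\, B_{n-m,k-\ell}(x),
\]
where $c_{m,\ell}(\tau)$ is the scalar coefficient appearing in \eqref{eq:th1a} for the first identity and in \eqref{eq:th1b} for the second; so it is enough to evaluate a generic sum of this shape.

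First I would apply Lemma~\ref{lem:Bellproduct}, which with $z_j = x_j/j!$ and $z^v = z_1^{v_1}z_2^{v_2}\cdots$ gives $\binom{n}{m} B_{m,\ell}(x) B_{n-m,k-\ell}(x) = \sum_{v\in\pi(n,k)}\tfrac{n!}{v_1!v_2!\cdots}\,W_{m,\ell}(v)\,z^v$. Substituting this into the sum and interchanging the (finite) summations moves the sum over $v$ to the outside:
\[
\sum_{v\in\pi(n,k)}\frac{n!}{v_1!v_2!\cdots}\,z^v\left(\sum_{\ell=0}^{k}\sum_{m=\ell}^n c_{m,\ell}(\tau)\,W_{m,\ell}(v)\right).
\]
Now each $v\in\pi(n,k)$, truncated to its finitely many nonzero entries, is a finite sequence of nonnegative integers with $v_1+\cdots+v_d=k>0$ and $v_1+2v_2+\cdots+dv_d=n$, so Theorem~\ref{thm:binomial} applies to it directly: the inner double sum is exactly the left-hand side of \eqref{eq:th1a} (resp.\ of \eqref{eq:th1b}) and hence equals $\binom{\tau}{k}$. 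The displayed expression therefore collapses to $\binom{\tau}{k}\sum_{v\in\pi(n,k)}\tfrac{n!}{v_1!v_2!\cdots}\,z^v = \binom{\tau}{k}\,B_{n,k}(x)$, which is the assertion.

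There is no genuine obstacle here; the result is pure bookkeeping on top of Lemma~\ref{lem:Bellproduct} and Theorem~\ref{thm:binomial}. The only points worth a remark are: the convention $W_{m,\ell}(v)=0$ when $\pi_d(m,\ell)=\emptyset$ (together with the vanishing of $B_{m,\ell}B_{n-m,k-\ell}$ outside the relevant range) guarantees that the summation limits $0\le\ell\le k$, $\ell\le m\le n$ used in the corollary, in \eqref{eq:th1a}, and in \eqref{eq:th1b} all refer to the same sum; the factors $1/\alpha(\ell,m)$ are read under the standing convention that parameters are chosen so that every expression is defined; and one should note in passing why Theorem~\ref{thm:BellConvolution} cannot simply be quoted for the second identity, namely that its coefficient involves $\binom{\tau-\alpha(\ell,m)}{k-\ell}$ and so has $\tau$-degree $k-\ell$ rather than the required $\le\ell$ — this is exactly why I would route both identities through Lemma~\ref{lem:Bellproduct} and Theorem~\ref{thm:binomial} rather than through Theorem~\ref{thm:BellConvolution}.
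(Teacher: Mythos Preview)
Your proposal is correct and matches the paper's intended argument: the paper offers no explicit proof beyond the remark that these are the special cases from Theorem~\ref{thm:binomial}, and your explicit combination of Lemma~\ref{lem:Bellproduct} with Theorem~\ref{thm:binomial} is precisely the proof of Theorem~\ref{thm:BellConvolution} run with those specific coefficient systems. Your observation that the second identity does not literally satisfy the degree-$\le\ell$ hypothesis of Theorem~\ref{thm:BellConvolution}, and therefore must be routed directly through Theorem~\ref{thm:binomial}, is a genuine subtlety that the paper leaves implicit.
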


And from Corollary~\ref{cor:partialfractions}, we deduce:
\begin{corollary}\label{cor:Bellconvolution}
For any sequence $x=(x_1,x_2,\dots)$, we have
\begin{align*}
\sum_{\ell=0}^{k}\sum_{m=\ell}^n \frac{\tau\binom{\alpha(\ell,m)}{k-\ell}\binom{\tau-\alpha(\ell,m)}{\ell}\binom{n}{m}}{\alpha(\ell,m)\big(\tau-\alpha(\ell,m)\big)\binom{k}{\ell}} B_{m,\ell}(x) B_{n-m,k-\ell}(x) 
=\tfrac{\tau-\alpha(0,0)+\alpha(k,n)}{\alpha(k,n)(\tau-\alpha(0,0))}\binom{\tau}{k} B_{n,k}(x).
\end{align*}
\end{corollary}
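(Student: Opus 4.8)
The plan is to derive Corollary~\ref{cor:Bellconvolution} from Corollary~\ref{cor:partialfractions} in exactly the same way that Corollary~\ref{cor:basicConvolution} follows from Theorem~\ref{thm:binomial}: namely, by combining the scalar identity on the $W_{m,\ell}(v)$ with the product expansion of Bell polynomials in Lemma~\ref{lem:Bellproduct}. The key observation is that the coefficient
\begin{equation*}
c_{m,\ell}(\tau) := \frac{\tau\binom{\alpha(\ell,m)}{k-\ell}\binom{\tau-\alpha(\ell,m)}{\ell}}{\alpha(\ell,m)\big(\tau-\alpha(\ell,m)\big)\binom{k}{\ell}}
\end{equation*}
does not depend on $v$, so it can be pulled outside the sum over $v\in\pi(n,k)$.

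First I would write, using Lemma~\ref{lem:Bellproduct} termwise,
\begin{align*}
\sum_{\ell=0}^{k}\sum_{m=\ell}^n c_{m,\ell}(\tau)\binom{n}{m} B_{m,\ell}(x) B_{n-m,k-\ell}(x)
&= \sum_{\ell=0}^{k}\sum_{m=\ell}^n c_{m,\ell}(\tau)\sum_{v\in\pi(n,k)}\frac{n!}{v_1!v_2!\dotsb}W_{m,\ell}(v)\,z^v \\
&= \sum_{v\in\pi(n,k)}\frac{n!}{v_1!v_2!\dotsb}\left(\sum_{\ell=0}^{k}\sum_{m=\ell}^n c_{m,\ell}(\tau)W_{m,\ell}(v)\right)z^v,
\end{align*}
where $z^v$ abbreviates $\big(\tfrac{x_1}{1!}\big)^{v_1}\big(\tfrac{x_2}{2!}\big)^{v_2}\cdots$ as in the proof of Theorem~\ref{thm:BellConvolution}. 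Then I would invoke equation \eqref{eq:th1c} of Corollary~\ref{cor:partialfractions}, which says precisely that the inner double sum equals $\tfrac{\tau-\alpha(0,0)+\alpha(k,n)}{\alpha(k,n)(\tau-\alpha(0,0))}\binom{\tau}{k}$ for every $v\in\pi_d(n,k)$ — note this scalar is again independent of $v$. Substituting it back and factoring it out leaves $\tfrac{\tau-\alpha(0,0)+\alpha(k,n)}{\alpha(k,n)(\tau-\alpha(0,0))}\binom{\tau}{k}\sum_{v\in\pi(n,k)}\tfrac{n!}{v_1!v_2!\dotsb}z^v$, and the remaining sum is exactly $B_{n,k}(x)$ by definition, giving the claimed identity.

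There is no serious obstacle here; this is a routine ``lift a scalar identity to a Bell-polynomial identity'' argument, and the only point that deserves a word of care is applicability of \eqref{eq:th1c}. That corollary is stated for $v\in\pi_d(n,k)$ with $d$ arbitrary, so when applying it inside the sum over $v\in\pi(n,k)$ one may truncate each $v$ at index $d=n-k+1$ (all later coordinates of any $v\in\pi(n,k)$ vanish) so that $v\in\pi_d(n,k)$; since $W_{m,\ell}(v)$ is unaffected by trailing zeros, this causes no trouble. The footnote convention that parameters are chosen so every expression is defined handles the denominators $\alpha(\ell,m)$, $\tau-\alpha(\ell,m)$, $\alpha(k,n)$, and $\tau-\alpha(0,0)$. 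One could alternatively phrase the whole thing as an instance of Theorem~\ref{thm:BellConvolution} with a suitable $p_{m,\ell}(\tau)$, but going directly through Corollary~\ref{cor:partialfractions} and Lemma~\ref{lem:Bellproduct} is the cleanest route.
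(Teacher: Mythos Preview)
Your proposal is correct and follows precisely the route the paper intends: the corollary is stated immediately after the phrase ``And from Corollary~\ref{cor:partialfractions}, we deduce,'' and the implicit argument is exactly the one you spell out---combine Lemma~\ref{lem:Bellproduct} with \eqref{eq:th1c} just as Theorem~\ref{thm:BellConvolution} combined Lemma~\ref{lem:Bellproduct} with Theorem~\ref{thm:generalBinomial}. Your remarks about truncating $v$ at $d=n-k+1$ and about the footnote convention on denominators are appropriate technical hygiene.
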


\begin{example}\label{ex:SimpleConvolution}
Let $r$ be an integer with $0< r \le k$.  In the special case when $\alpha(\ell,m)=r$ and $\tau=k$, the first identity in Corollary~\ref{cor:basicConvolution} gives
\begin{equation*} 
B_{n,k}(x) = \sum_{\ell=0}^{k}\sum_{m=\ell}^n \frac{\binom{r}{k-\ell}\binom{k-r}{\ell}\binom{n}{m}}{\binom{k}{\ell}} B_{m,\ell}(x) B_{n-m,k-\ell}(x), 
\end{equation*}
and since $\binom{r}{k-\ell}\binom{k-r}{\ell}=0$ unless $\ell=k-r$, we arrive at
\begin{equation}\label{eq:alphaConstant} 
\binom{k}{r} B_{n,k}(x) = \sum_{m=k-r}^{n-r} \binom{n}{m} B_{m,k-r}(x) B_{n-m,r}(x).
\end{equation}
Observe that when $r=1$, we get the basic recurrence formula
\begin{equation}\label{eq:1stepRecursion}
B_{n,k}(x)=\frac{1}{k}\sum_{m=k-1}^{n-1}\binom{n}{m}x_{n-m}B_{m,k-1}(x),
\end{equation}
see Comtet \cite{Comtet} (relation [3k], p.~136). In particular, since the Stirling numbers of second kind satisfy $S(n,k)=B_{n,k}(1,1,\dots)$,  formula \eqref{eq:alphaConstant} gives the known recurrence 
\begin{equation*} 
\binom{k}{r} S(n,k) = \sum_{m=k-r}^{n-r} \binom{n}{m} S(m,k-r) S(n-m,r).
\end{equation*}
The same recurrence holds for the unsigned Stirling numbers of first kind.

The identity \eqref{eq:alphaConstant} can also be found in a recent paper by Cvijovi{\'c} \cite{Cvijovic}. There the author gives furthermore a recurrence relation for $B_{n,k}$ (see \cite[equation (1.3)]{Cvijovic}) which, combined with \eqref{eq:1stepRecursion}, leads to the interesting identity
\begin{equation}\label{eq:zerosum}
\sum_{m=k-1}^{n-1} \left[\frac{1}{k}\binom{n}{m}-\binom{n-1}{m}\right] x_{n-m}B_{m,k-1}(x)=0.
\end{equation}
\end{example}

\section{Inverse relations}
\label{sec:InverseRelation}

For $n\in\mathbb{N}$, $b\in\mathbb{Z}$, $\lambda\in\mathbb{C}$, and a sequence $z=(z_1, z_2, \dots)$, we define
\begin{equation*}
\Q_{n,b}(\lambda,z) = \sum_{k=1}^{n} \binom{\lambda+bk}{k-1}(k-1)!B_{n,k}(z).
\end{equation*}
These functions appear naturally in certain compositions of formal power series. For instance, if $Z(t)=1+\sum_{n\ge 1} z_n \frac{t^n}{n!}$, then 
\begin{equation*}
 \log(Z(t)) = \sum_{n\ge 1} \L_n \frac{t^n}{n!}
\end{equation*}
with $\L_n=\sum_{k=1}^{n} (-1)^{k-1}(k-1)!B_{n,k}(z)$ (logarithmic polynomials), see Section~3.5 in the book by Comtet \cite{Comtet}. Moreover, for any complex number $r$,
\begin{equation*}
 Z(t)^r = 1+\sum_{n\ge 1} \P^{(r)}_n \frac{t^n}{n!}
\end{equation*}
with $\P^{(r)}_n=\sum_{k=1}^{n} (r)_k B_{n,k}(z)$ (potential polynomials). Now, since $(-1)^{k-1}=\binom{-1}{k-1}$ and since $(r)_k=r\binom{r-1}{k-1}(k-1)!$, we can write
\begin{equation}\label{eq:logpotPolynomials}
 \L_n(z) = Q_{n,0}(-1,z) \;\text{ and }\; \P^{(r)}_n(z)=r \Q_{n,0}(r-1,z).
\end{equation}

\begin{lemma}\label{prop:ac}
For $\lambda\in\mathbb{N}$ we have
\begin{equation*}
\Q_{n,0}(\lambda,z) = z_n+\sum_{i=1}^{\lambda} \frac{i}{\lambda+1}\sum_{m=1}^{n-1}\binom{n}{m}z_{n-m}\,\Q_{m,0}(i-1,z)
\end{equation*}
\end{lemma}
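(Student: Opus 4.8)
The plan is to prove this recurrence by converting it into a statement about formal power series, where the operator $Q_{n,0}(\lambda,z)$ has a transparent meaning. Fix a sequence $z=(z_1,z_2,\dots)$ and set $Z(t)=1+\sum_{n\ge 1}z_n\frac{t^n}{n!}$. The key observation is that, by the computation in \eqref{eq:logpotPolynomials} and the discussion preceding it, for a nonnegative integer $\lambda$ the generating function of $Q_{n,0}(\lambda,z)$ is essentially a power of $Z$: since $(r)_k=r\binom{r-1}{k-1}(k-1)!$ we have $\P^{(r)}_n(z)=r\,Q_{n,0}(r-1,z)$, so writing $A_\lambda(t)=\sum_{n\ge 1}Q_{n,0}(\lambda,z)\frac{t^n}{n!}$ and taking $r=\lambda+1$ gives $(\lambda+1)A_\lambda(t)=Z(t)^{\lambda+1}-1$. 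Thus $A_\lambda(t)=\frac{1}{\lambda+1}\bigl(Z(t)^{\lambda+1}-1\bigr)$.

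With this in hand the recurrence should fall out of a derivative/product identity. First I would observe that $z_n+\sum_{m=1}^{n-1}\binom{n}{m}z_{n-m}\,Q_{m,0}(i-1,z)$ is exactly the coefficient of $\frac{t^n}{n!}$ in the product $\bigl(Z(t)-1\bigr)\cdot\bigl(1+A_{i-1}(t)\bigr)=\bigl(Z(t)-1\bigr)\cdot\frac{1}{i}Z(t)^{i}$; here the leading term $z_n$ comes from the ``$1$'' in $1+A_{i-1}$ and the convolution sum from the product $(Z-1)\cdot A_{i-1}$. Hence
\begin{equation*}
\sum_{i=1}^{\lambda}\frac{i}{\lambda+1}\Bigl[z_n+\sum_{m=1}^{n-1}\binom{n}{m}z_{n-m}Q_{m,0}(i-1,z)\Bigr]
=\Bigl[\tfrac{t^n}{n!}\Bigr]\;\frac{Z(t)-1}{\lambda+1}\sum_{i=1}^{\lambda}Z(t)^{i}.
\end{equation*}
Then I would sum the geometric series: $\sum_{i=1}^{\lambda}Z^{i}=Z\frac{Z^{\lambda}-1}{Z-1}$, so the right-hand side becomes $\frac{1}{\lambda+1}Z(t)\bigl(Z(t)^{\lambda}-1\bigr)=\frac{1}{\lambda+1}\bigl(Z(t)^{\lambda+1}-Z(t)\bigr)=A_\lambda(t)-\frac{1}{\lambda+1}\bigl(Z(t)-1\bigr)$. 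Comparing coefficients of $\frac{t^n}{n!}$ for $n\ge 1$ yields $Q_{n,0}(\lambda,z)-\frac{1}{\lambda+1}z_n$ on the right, i.e. exactly $Q_{n,0}(\lambda,z)-z_n+\frac{\lambda}{\lambda+1}z_n$; rearranging gives the claimed formula once one checks the bookkeeping of the $z_n$ terms (there are $\lambda$ copies of $\frac{z_n}{\lambda+1}$ on the left, accounting for the difference between $z_n$ and $\frac{z_n}{\lambda+1}$).

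Alternatively, and perhaps more in the spirit of the surrounding combinatorial arguments, one can avoid power series entirely: expand $Q_{m,0}(i-1,z)$ and $Q_{n,0}(\lambda,z)$ by their definitions, use the product formula \eqref{eq:alphaConstant} from Example~\ref{ex:SimpleConvolution} (with $r=1$, i.e. \eqref{eq:1stepRecursion}) to peel off one variable $z_{n-m}$ from $B_{n,k}(z)$, and then reduce the resulting double sum over $i$ and $k$ to a single binomial identity in $\binom{\lambda+bk}{k-1}$-type coefficients by summing over $i$. The main obstacle in either route is the same: matching the combinatorial weights, specifically verifying that $\sum_{i=1}^{\lambda}\frac{i}{\lambda+1}\binom{(i-1)+(\text{lower order})}{\cdot}$-type sums collapse to the single coefficient $\binom{\lambda}{k-1}(k-1)!$ appearing in $Q_{n,0}(\lambda,z)=\sum_k\binom{\lambda}{k-1}(k-1)!B_{n,k}(z)$. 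The generating-function approach handles this collapse automatically via the geometric-series summation, so I would carry out that route in full and relegate the direct combinatorial verification to a remark.
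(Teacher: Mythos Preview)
Your generating-function route is sound in outline and genuinely different from the paper's argument (which manipulates the definition directly via the recurrence \eqref{eq:1stepRecursion} and then invokes the hockey-stick identity $\binom{\lambda+1}{k+1}=\sum_{i=1}^{\lambda}\binom{i}{k}$ to produce the sum over $i$). However, one step fails as written: you assert $1+A_{i-1}(t)=\frac{1}{i}Z(t)^{i}$, but from $A_{i-1}(t)=\frac{1}{i}\bigl(Z(t)^{i}-1\bigr)$ one gets $1+A_{i-1}(t)=\dfrac{i-1+Z(t)^{i}}{i}$, which equals $\frac{1}{i}Z(t)^{i}$ only when $i=1$. This error propagates to your final bookkeeping: the $z_n$ contributions on the left of your display carry weights $\frac{i}{\lambda+1}$ and sum to $\frac{\lambda}{2}\,z_n$, not to ``$\lambda$ copies of $\frac{z_n}{\lambda+1}$'', so the balance you describe does not hold.

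The repair is simple: do not fold the stray $z_n$ into the sum over $i$. Use directly
\[
\sum_{m=1}^{n-1}\binom{n}{m}z_{n-m}\,Q_{m,0}(i-1,z)
=\Bigl[\tfrac{t^n}{n!}\Bigr]\,(Z-1)\,A_{i-1}
=\Bigl[\tfrac{t^n}{n!}\Bigr]\,\tfrac{1}{i}(Z-1)(Z^{i}-1),
\]
so that
\[
\sum_{i=1}^{\lambda}\frac{i}{\lambda+1}\cdot\frac{(Z-1)(Z^{i}-1)}{i}
=\frac{Z-1}{\lambda+1}\sum_{i=1}^{\lambda}(Z^{i}-1)
=\frac{1}{\lambda+1}\bigl(Z^{\lambda+1}-Z-\lambda(Z-1)\bigr),
\]
whose $\tfrac{t^n}{n!}$-coefficient is exactly $Q_{n,0}(\lambda,z)-z_n$, with no residual bookkeeping required.
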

\begin{proof}
Using the identity \eqref{eq:1stepRecursion}, and since $B_{n,1}(z)=z_n$, we have
\begin{align*}
\Q_{n,0}(\lambda,z) - z_n &= 
\sum_{k=2}^{n}\binom{\lambda}{k-1}(k-1)!\biggl(\frac{1}{k}\sum_{m=k-1}^{n-1}\binom{n}{m}z_{n-m}B_{m,k-1}(z)\biggr) \\ 
&=\sum_{k=1}^{n-1}\sum_{m=k}^{n-1}\binom{\lambda}{k}\frac{k!}{k+1}\binom{n}{m}z_{n-m}B_{m,k}(z) \\
&=\sum_{m=1}^{n-1}\binom{n}{m}z_{n-m}\sum_{k=1}^{m}\frac{k!}{\lambda+1}\binom{\lambda+1}{k+1}B_{m,k}(z)\\
&=\sum_{m=1}^{n-1}\binom{n}{m}z_{n-m}\sum_{k=1}^{m}\frac{k!}{\lambda+1}\bigg(\sum_{i=1}^{\lambda}\binom{i}{k}\bigg)B_{m,k}(z) \\
&=\sum_{i=1}^{\lambda}\frac{i}{\lambda+1}\sum_{m=1}^{n-1}\binom{n}{m}z_{n-m}\sum_{k=1}^{m}\binom{i-1}{k-1}(k-1)!B_{m,k}(z).
\end{align*}
At last, replace the interior sum over $k$ by $\Q_{m,0}(i-1,z)$ and solve for $\Q_{n,0}(\lambda,z)$.
\end{proof}

\bigskip
The next lemma is straightforward.
\begin{lemma}\label{Qproduct}
Given any sequence $z=(z_1, z_2, \dots)$, the following product formula holds:
\begin{align*}
\Q_{n_1,b_1}(\lambda_1,z)  &\; \Q_{n_2,b_2}(\lambda_2,z) \\
&= \sum_{k=2}^{n_1+n_2}\sum_{\ell=1}^{n_2}\frac{k! \binom{\lambda_1+b_1(k-\ell)+1}{k-\ell}\binom{\lambda_2+b_2\ell+1}{\ell}}{(\lambda_1+b_1(k-\ell)+1) (\lambda_2+b_2\ell+1)\binom{k}{\ell}}B_{n_1,k-\ell}(z)B_{n_2,\ell}(z).
\end{align*}
\end{lemma}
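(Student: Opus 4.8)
The plan is to expand both factors $\Q_{n_1,b_1}(\lambda_1,z)$ and $\Q_{n_2,b_2}(\lambda_2,z)$ using their definition, multiply the resulting double sum, and then recognize the product of two partial Bell polynomials as a single indexed sum via a Bell-polynomial product formula. Writing out the definition gives
\[
\Q_{n_1,b_1}(\lambda_1,z)\,\Q_{n_2,b_2}(\lambda_2,z)
= \sum_{p=1}^{n_1}\sum_{q=1}^{n_2}\binom{\lambda_1+b_1p}{p-1}(p-1)!\,\binom{\lambda_2+b_2q}{q-1}(q-1)!\,B_{n_1,p}(z)B_{n_2,q}(z).
\]
The natural change of variables is $k=p+q$ and $\ell=q$ (so $p=k-\ell$), which turns the index ranges $1\le p\le n_1$, $1\le q\le n_2$ into $2\le k\le n_1+n_2$ and $\max(1,k-n_1)\le \ell\le\min(k-1,n_2)$; since $B_{n_1,k-\ell}(z)=0$ when $k-\ell>n_1$ or $k-\ell<1$, and $B_{n_2,\ell}(z)=0$ when $\ell>n_2$, we may write the inner sum simply as $\sum_{\ell=1}^{n_2}$ as in the statement. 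After this substitution the summand already carries the factors $B_{n_1,k-\ell}(z)B_{n_2,\ell}(z)$ that appear on the right-hand side, so the entire content of the lemma reduces to the coefficient identity
\[
\binom{\lambda_1+b_1(k-\ell)}{k-\ell-1}(k-\ell-1)!\,\binom{\lambda_2+b_2\ell}{\ell-1}(\ell-1)!
= \frac{k!\,\binom{\lambda_1+b_1(k-\ell)+1}{k-\ell}\binom{\lambda_2+b_2\ell+1}{\ell}}{(\lambda_1+b_1(k-\ell)+1)(\lambda_2+b_2\ell+1)\binom{k}{\ell}}.
\]

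This last identity is a routine manipulation of binomial coefficients and factorials. The key observation is that for any $x$ and any positive integer $j$,
\[
\binom{x}{j-1}(j-1)! = \frac{\binom{x+1}{j}}{x+1}\,j!\cdot\frac{1}{j}\cdot j = \frac{j!}{x+1}\binom{x+1}{j},
\]
using $\binom{x}{j-1} = \frac{j}{x+1}\binom{x+1}{j}$, which follows from $\binom{x+1}{j} = \frac{x+1}{j}\binom{x}{j-1}$. Applying this with $x=\lambda_1+b_1(k-\ell)$, $j=k-\ell$ and with $x=\lambda_2+b_2\ell$, $j=\ell$ converts the left-hand side into
\[
\frac{(k-\ell)!\,\ell!}{(\lambda_1+b_1(k-\ell)+1)(\lambda_2+b_2\ell+1)}\binom{\lambda_1+b_1(k-\ell)+1}{k-\ell}\binom{\lambda_2+b_2\ell+1}{\ell},
\]
and the identity follows from $(k-\ell)!\,\ell! = k!/\binom{k}{\ell}$.

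I expect no serious obstacle here; the only point requiring a little care is the bookkeeping of the index ranges after the substitution $(p,q)\mapsto(k-\ell,\ell)$—specifically, justifying that extending $\ell$ to run over all of $1,\dots,n_2$ (and $k$ over $2,\dots,n_1+n_2$) introduces only terms in which one of the Bell polynomials vanishes, so that no spurious contributions are added. Everything else is the elementary coefficient computation above, which is why the statement is billed as "straightforward."
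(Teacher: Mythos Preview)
Your proposal is correct and is precisely the intended argument: the paper offers no proof for this lemma beyond declaring it ``straightforward,'' and your direct expansion of the two $\Q$-sums followed by the change of variables $(p,q)\mapsto(k-\ell,\ell)$ and the elementary identity $\binom{x}{j-1}(j-1)!=\dfrac{j!}{x+1}\binom{x+1}{j}$ is exactly the straightforward verification the authors have in mind. Your remark about the harmless extension of the $\ell$-range (using the vanishing of $B_{n_1,k-\ell}$ and $B_{n_2,\ell}$ outside their natural ranges) is the only subtlety, and you have handled it correctly.
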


\bigskip
\begin{theorem}\label{lambdaTheorem} 
Let $a,b\in\mathbb{Z}$. Given $x=(x_1,x_2,\dotsc)$, define $y=(y_1,y_2,\dots)$ by
\begin{equation*}
y_n = \Q_{n,b}(an,x) = \sum_{k=1}^n \binom{an+bk}{k-1}(k-1)!B_{n,k}(x)
\end{equation*}
for every $n\in\mathbb{N}$. Then, for any $\lambda\in\mathbb{C}$, we have
\begin{equation}\label{eq:yx}
\sum_{k=1}^n\binom{\lambda}{k-1}(k-1)!B_{n,k}(y)=\sum_{k=1}^n\binom{\lambda+an+bk}{k-1}(k-1)!B_{n,k}(x).
\end{equation}
In other words, for any $\lambda$,
\begin{equation*}
\Q_{n,0}(\lambda,y) = \Q_{n,b}(\lambda+an,x).
\end{equation*}
\end{theorem}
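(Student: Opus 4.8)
The plan is to prove the identity \eqref{eq:yx} by induction on $n$, exploiting the fact that both sides, when viewed as functions of $\lambda$, are polynomials in $\lambda$. The key structural observation is that the claimed identity $\Q_{n,0}(\lambda,y) = \Q_{n,b}(\lambda+an,x)$ says that the operation ``apply $\Q_{\bullet,0}(\lambda,-)$'' composes nicely with the operation $y_n = \Q_{n,b}(an,x)$. Since $\Q_{n,0}(\lambda,z)$ is a polynomial in $\lambda$ of degree $n$ for fixed $z$ (the $k=n$ term contributes $\binom{\lambda}{n-1}(n-1)!\,B_{n,n}(z)$, and lower $k$ give lower degree), it suffices to verify \eqref{eq:yx} for $\lambda$ ranging over a set of more than $n$ values, e.g.\ $\lambda\in\mathbb{N}$. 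This is where Lemma~\ref{prop:ac} becomes the engine: it gives a recursion for $\Q_{n,0}(\lambda,z)$ in terms of $\Q_{m,0}(i-1,z)$ with $m<n$ and $i\le\lambda$, which is perfectly suited to an induction on $n$ (and a nested one on $\lambda$).

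First I would set up the double induction. Fix $a,b$ and the sequence $x$, hence $y$. The base case $n=1$ is immediate since $B_{1,1}=1$, so both sides of \eqref{eq:yx} equal $\binom{\lambda}{0}0!\,y_1 = y_1$ on the left and $\binom{\lambda+a+b}{0}0!\,x_1 = x_1$ on the right, and $y_1=\Q_{1,b}(a,x)=\binom{a+b}{0}x_1=x_1$. For the inductive step, assume \eqref{eq:yx} holds (for all $\lambda$) for all indices less than $n$. Apply Lemma~\ref{prop:ac} to $\Q_{n,0}(\lambda,y)$ with $\lambda\in\mathbb{N}$:
\begin{equation*}
\Q_{n,0}(\lambda,y) = y_n + \sum_{i=1}^{\lambda}\frac{i}{\lambda+1}\sum_{m=1}^{n-1}\binom{n}{m}y_{n-m}\,\Q_{m,0}(i-1,y).
\end{equation*}
By the inductive hypothesis on $m<n$ we may replace $\Q_{m,0}(i-1,y)$ by $\Q_{m,b}(i-1+am,x)$, and $y_{n-m}=\Q_{n-m,b}(a(n-m),x)$, $y_n = \Q_{n,b}(an,x)$ by definition. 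This rewrites $\Q_{n,0}(\lambda,y)$ entirely in terms of $x$: a ``diagonal'' term $\Q_{n,b}(an,x)$ plus a sum of products $\Q_{n-m,b}(a(n-m),x)\,\Q_{m,b}(i-1+am,x)$.

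The heart of the argument is then to show this equals $\Q_{n,b}(\lambda+an,x)$. Here I would invoke the product formula of Lemma~\ref{Qproduct} to expand each product $\Q_{n-m,b}(a(n-m),x)\,\Q_{m,b}(i-1+am,x)$ as a double sum of $B_{n-m,k-\ell}(x)B_{m,\ell}(x)$, then swap the order of summation to collect, for each fixed pair $(n,k)$ after summing $m$, a coefficient multiplying $B_{n,k}(x)$ via the convolution identities of Section~\ref{sec:BellPolynomials}. The target $\Q_{n,b}(\lambda+an,x) = \sum_{k}\binom{\lambda+an+bk}{k-1}(k-1)!\,B_{n,k}(x)$ tells us exactly which coefficient of $B_{n,k}(x)$ we must produce. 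So the remaining task is a binomial-coefficient identity: after performing the $i$-sum (a Chu--Vandermonde-type collapse, since $\sum_{i=1}^\lambda\binom{i}{k} = \binom{\lambda+1}{k+1}$, used already in the proof of Lemma~\ref{prop:ac}) and the $m$-sum, check that the resulting closed form for the coefficient of $B_{n,k}(x)$ equals $\binom{\lambda+an+bk}{k-1}(k-1)! - [\text{the }k\text{-part of }\Q_{n,b}(an,x)]$, matching the recursion. Once the identity holds for all $\lambda\in\mathbb{N}$, the polynomiality in $\lambda$ (degree $\le n$ on both sides, infinitely many agreements) upgrades it to all $\lambda\in\mathbb{C}$.

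\textbf{Main obstacle.} I expect the bookkeeping in the last step to be the real work: after substituting Lemma~\ref{Qproduct} one has a triple sum over $i$, $m$, and the internal Bell-polynomial indices, and the indices $(k,\ell)$ from the product formula interact with the shifts $b(k-\ell)$, $b\ell$ and the arguments $a(n-m)$, $am+i-1$ in a way that is easy to mishandle. The cleanest route is probably to fix the output pair $(n,k)$, reindex so that $B_{n-m,k-\ell}(x)B_{m,\ell}(x)$ has the shape appearing in Corollary~\ref{cor:basicConvolution} or Corollary~\ref{cor:Bellconvolution}, choose the linear function $\alpha(\ell,m)$ to absorb the $b$-dependent shifts, and read off that the weighted $m$-sum collapses to a multiple of $B_{n,k}(x)$. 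Verifying that the scalar multiple is precisely the one demanded by $\Q_{n,b}(\lambda+an,x)$ minus the diagonal term — i.e.\ that the parametrized convolution identity is being applied with exactly the right $\tau$ and $\alpha$ — is the crux, and is exactly the point where the ``parametrization of suitable identities'' advertised in the introduction does its job.
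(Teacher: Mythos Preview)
Your plan matches the paper's proof essentially step for step: reduce to $\lambda\in\mathbb{N}$ by polynomiality, induct on $n$, apply Lemma~\ref{prop:ac} to $\Q_{n,0}(\lambda,y)$, replace $\Q_{m,0}(i-1,y)$ via the induction hypothesis and $y_{n-m}$ by its definition, expand the products with Lemma~\ref{Qproduct}, and collapse the $(m,\ell)$-sum using Corollary~\ref{cor:Bellconvolution} (with $\alpha=\varrho-\delta+1$, $\tau=\varrho+i+1$ where $\varrho=an+bk$, $\delta=am+b\ell$). Two small corrections to your plan: no nested induction on $\lambda$ is needed (the hypothesis on $m<n$ already holds for all $\lambda$), and the final $i$-sum does not collapse by Chu--Vandermonde but rather telescopes, since the coefficient of $B_{n,k}(x)$ after the convolution step has the form $\tfrac{(i+1)}{\varrho+i+1}\binom{\varrho+i+1}{k}-\tfrac{i}{\varrho+i}\binom{\varrho+i}{k}-\text{const}$.
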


\begin{proof}
At first, we assume $\lambda$ to be a positive integer. As both sides of \eqref{eq:yx} are polynomials in $\lambda$, the statement will then be valid for any $\lambda\in\mathbb{C}$.

We proceed by induction in $n$. For $n=1$,
\begin{equation*}
 \Q_{1,0}(\lambda,y)=B_{1,1}(y)=y_1 = B_{1,1}(x)=\Q_{1,b}(\lambda+a,x)  \text{ for every } \lambda.
\end{equation*}

Assume $\Q_{m,0}(\lambda,y) = \Q_{m,b}(\lambda+am,x)$ for every $1\le m<n$ and any $\lambda\in\mathbb{N}$. By Lemma~\ref{prop:ac}, 
\begin{equation*}
\Q_{n,0}(\lambda,y) = y_n+\sum_{i=1}^{\lambda} \frac{i}{\lambda+1}\sum_{m=1}^{n-1}\binom{n}{m}y_{n-m}\,\Q_{m,0}(i-1,y).
\end{equation*}
Since $m<n$, we use the induction hypothesis on $\Q_{m,0}(i-1,y)$ to obtain
\begin{align*}
\Q_{n,0}(\lambda,y) &- y_n = \sum_{i=1}^{\lambda} \frac{i}{\lambda+1}\sum_{m=1}^{n-1}\binom{n}{m}y_{n-m}\,\Q_{m,b}(i-1+am,x)\\
&= \sum_{i=1}^{\lambda} \frac{i}{\lambda+1}\sum_{m=1}^{n-1}\binom{n}{m}\Q_{n-m,b}(a(n-m),x)\,\Q_{m,b}(i-1+am,x),
\intertext{ which by Lemma~\ref{Qproduct} becomes}
&= \sum_{i=1}^{\lambda} \frac{i}{\lambda+1}\sum_{m=1}^{n-1}\binom{n}{m}\sum_{k=2}^{n}\sum_{\ell=1}^m
\frac{k! \binom{\varrho-\delta+1}{k-\ell}\binom{\delta+i}{\ell}}{(\varrho-\delta+1)(\delta+i)\binom{k}{\ell}}B_{n-m,k-\ell}(x)B_{m,\ell}(x)\\
&= \sum_{i=1}^{\lambda}\sum_{k=2}^{n} \frac{i k!}{\lambda+1}\sum_{\ell=1}^{k-1}\sum_{m=\ell}^{n-1}
\frac{\binom{\varrho-\delta+1}{k-\ell}\binom{\delta+i}{\ell}\binom{n}{m}}{(\varrho-\delta+1)(\delta+i)\binom{k}{\ell}} B_{n-m,k-\ell}(x)B_{m,\ell}(x)
\end{align*}
with $\varrho=an+bk$ and $\delta=am+b\ell$. Now, with $\alpha=\varrho-\delta+1$ and $\tau=\varrho+i+1$, the identity from Corollary~\ref{cor:Bellconvolution} becomes
\begin{equation*}
\sum_{\ell=0}^{k}\sum_{m=\ell}^n \frac{(\varrho+i+1)\binom{\varrho-\delta+1}{k-\ell}\binom{\delta+i}{\ell}\binom{n}{m}}{(\varrho-\delta+1)(\delta+i)\binom{k}{\ell}} B_{m,\ell} B_{n-m,k-\ell} 
=\frac{i+1}{i}\binom{\varrho+i+1}{k} B_{n,k}.
\end{equation*}
Hence
\begin{equation*}
\sum_{\ell=0}^{k}\sum_{m=\ell}^n \frac{\binom{\varrho-\delta+1}{k-\ell}\binom{\delta+i}{\ell}\binom{n}{m}}{(\varrho-\delta+1)(\delta+i)\binom{k}{\ell}} B_{m,\ell} B_{n-m,k-\ell} =\frac{i+1}{(\varrho+i+1)i}\binom{\varrho+i+1}{k} B_{n,k},
\end{equation*}
and so the sum from $\ell=1$ to $\ell=k-1$ (as needed above) is equal to
\begin{equation*}
\left(\frac{i+1}{(\varrho+i+1)i}\binom{\varrho+i+1}{k} - \frac{1}{\varrho+i}\binom{\varrho+i}{k} - \frac{1}{(\varrho+1)i}\binom{\varrho+1}{k}\right) B_{n,k}(x).
\end{equation*}

Therefore,
\begin{align*}
\Q_{n,0}(\lambda,y) &- y_n  \\
&= \sum_{i=1}^{\lambda}\sum_{k=2}^{n} \frac{i k!}{\lambda+1}\left( \frac{(i+1)\binom{\varrho+i+1}{k}}{(\varrho+1+i)i}-\frac{\binom{\varrho+i}{k}}{\varrho+i}-\frac{\binom{\varrho+1}{k}}{(\varrho+1) i}\right) B_{n,k}(x)\\
&= \sum_{k=2}^{n} \frac{k!}{\lambda+1}\sum_{i=1}^{\lambda}\left( \frac{(i+1)\binom{\varrho+i+1}{k}}{(\varrho+i+1)}-\frac{i\binom{\varrho+i}{k}}{\varrho+i}-\frac{\binom{\varrho+1}{k}}{\varrho+1}\right) B_{n,k}(x)\\
&= \sum_{k=2}^{n} k!\left( \frac{\binom{\varrho+\lambda+1}{k}}{(\varrho+\lambda+1)}-\frac{\binom{\varrho+1}{k}}{\varrho+1}\right) B_{n,k}(x)\\
&= \sum_{k=1}^{n} k!\left( \frac{1}{k} \binom{\varrho+\lambda}{k-1}-\frac{1}{k}\binom{\varrho}{k-1}\right) B_{n,k}(x)\\
&= \sum_{k=1}^{n} (k-1)!\left( \binom{\lambda+an+bk}{k-1}-\binom{an+bk}{k-1}\right) B_{n,k}(x)\\
&= \Q_{n,b}(\lambda+an,x) - \Q_{n,b}(an,x).
\end{align*}
Since $y_n=\Q_{n,b}(an,x)$, we get $\Q_{n,0}(\lambda,y)=\Q_{n,b}(\lambda+an,x)$ as desired.
\end{proof}

\begin{remark}
The identity \eqref{eq:yx} is also valid in the form
\begin{equation*}
\sum_{k=k_0}^n\binom{\lambda}{k-k_0}(k-1)!B_{n,k}(y)=\sum_{k=k_0}^n \binom{\lambda+an+bk}{k-k_0}(k-1)!B_{n,k}(x).
\end{equation*}
For instance, for $k_0=2$, this follows from the identity $\binom{\lambda}{k-2}=\binom{\lambda+1}{k-1}-\binom{\lambda}{k-1}$ together with the fact that \eqref{eq:yx} holds for every $\lambda$.
\end{remark}

\begin{theorem}\label{thm:invRel}
Let $a,b\in\mathbb{Z}$, not both equal to $0$. Given any sequence $x=(x_1,x_2,\dotsc)$, we consider the sequence $y=(y_1,y_2,\dots)$ defined by
\begin{equation}\label{eq:inputy}
y_n = \sum_{k=1}^n \binom{an+bk}{k-1}(k-1)!B_{n,k}(x).
\end{equation}
Then, for every $n\in\mathbb{N}$, we have
\begin{equation}\label{eq:inputx}
x_n = \sum_{k=1}^n \frac{an+bk}{an+b} \binom{-an-b}{k-1}(k-1)!B_{n,k}(y).
\end{equation}
\end{theorem}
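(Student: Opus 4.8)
The plan is to obtain \eqref{eq:inputx} directly from Theorem~\ref{lambdaTheorem} (and the Remark immediately preceding Theorem~\ref{thm:invRel}) via a single well-chosen value of the free parameter $\lambda$, after first rewriting the prefactor $\frac{an+bk}{an+b}$ in a way that turns the right-hand side of \eqref{eq:inputx} into a combination of the ``pure'' sums $\sum_k\binom{\mu}{k-1}(k-1)!B_{n,k}(y)$ and $\sum_k\binom{\mu}{k-2}(k-1)!B_{n,k}(y)$ that Theorem~\ref{lambdaTheorem} knows how to transform. Throughout we work under the standing assumption $an+b\neq 0$, which is precisely the range in which \eqref{eq:inputx} is defined.

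The crux is the elementary identity
\[
\frac{an+bk}{an+b}\binom{-an-b}{k-1}=\binom{-an-b}{k-1}-b\binom{-an-b-1}{k-2}\qquad(k\ge 1),
\]
valid with the convention $\binom{\cdot}{-1}=0$; it follows from $an+bk=-(an+b)+b(k-1)$ together with the standard relation $(k-1)\binom{s}{k-1}=s\binom{s-1}{k-2}$. Substituting this into \eqref{eq:inputx} expresses its right-hand side as
\[
\sum_{k=1}^{n}\binom{-an-b}{k-1}(k-1)!\,B_{n,k}(y)\;-\;b\sum_{k=2}^{n}\binom{-an-b-1}{k-2}(k-1)!\,B_{n,k}(y).
\]
Now I would apply \eqref{eq:yx} with $\lambda=-an-b$ to the first sum and the $k_0=2$ version from the Remark with $\lambda=-an-b-1$ to the second. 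Since $-an-b+an+bk=b(k-1)$ and $-an-b-1+an+bk=b(k-1)-1$, these become $\sum_{k=1}^{n}\binom{b(k-1)}{k-1}(k-1)!\,B_{n,k}(x)$ and $\sum_{k=2}^{n}\binom{b(k-1)-1}{k-2}(k-1)!\,B_{n,k}(x)$, respectively. (If one prefers to avoid quoting the Remark, the $k_0=2$ sum can instead be handled by applying \eqref{eq:yx} at $\lambda=-an-b$ and $\lambda=-an-b-1$ and using Pascal's rule.)

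Combining the two pieces and splitting off the $k=1$ term $\binom{0}{0}B_{n,1}(x)=x_n$, the right-hand side of \eqref{eq:inputx} equals
\[
x_n+\sum_{k=2}^{n}\Bigl(\binom{b(k-1)}{k-1}-b\binom{b(k-1)-1}{k-2}\Bigr)(k-1)!\,B_{n,k}(x).
\]
Finally I would show every bracket vanishes: writing $m=b(k-1)$ and $j=k-1\ge1$, the identity $\binom{m}{j}=\frac{m}{j}\binom{m-1}{j-1}$ gives $\binom{m}{j}-b\binom{m-1}{j-1}=\bigl(\tfrac{m}{j}-b\bigr)\binom{m-1}{j-1}=0$. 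Hence the sum over $k\ge2$ is identically zero and \eqref{eq:inputx} follows. I do not anticipate a genuine obstacle: the only step requiring a moment's thought is the decomposition of the prefactor in the first display (and keeping track of the harmless degenerate case $an+b=0$); after that the argument is just Theorem~\ref{lambdaTheorem} plus routine binomial bookkeeping.
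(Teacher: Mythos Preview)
Your proof is correct and follows essentially the same route as the paper's own argument: the same decomposition of the prefactor, the same two applications of Theorem~\ref{lambdaTheorem} (at $\lambda=-an-b$ and, via the $k_0=2$ Remark, at $\lambda=-an-b-1$), and the same cancellation for $k\ge 2$. One harmless typo: in justifying the key identity you wrote $an+bk=-(an+b)+b(k-1)$, but it should read $an+bk=(an+b)+b(k-1)$; the identity itself and the remainder of the argument are unaffected.
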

\begin{proof}
For $k>1$ we rewrite
\begin{gather*}
\frac{an+bk}{an+b}\binom{-an-b}{k-1} = \binom{-an-b}{k-1} -b\binom{-an-b-1}{k-2}
\end{gather*}
and split the right-hand side of \eqref{eq:inputx} as
\begin{equation*}
\sum_{k=1}^n \binom{-an-b}{k-1}(k-1)!B_{n,k}(y) - b\sum_{k=2}^n \binom{-an-b-1}{k-2}(k-1)!B_{n,k}(y).
\end{equation*}
Using \eqref{eq:yx} with $\lambda=-an-b$ we obtain
\begin{align*}
\sum_{k=1}^n\binom{-an-b}{k-1}(k-1)!B_{n,k}(y) 
&=\sum_{k=1}^n\binom{b(k-1)}{k-1}(k-1)!B_{n,k}(x) \\
&=x_n + \sum_{k=2}^n \binom{b(k-1)}{k-1}(k-1)!B_{n,k}(x).
\end{align*}
Using again the identity \eqref{eq:yx} now with $\lambda=-an-b-1$, and starting at $k=2$, we get
\begin{equation*}
b\sum_{k=2}^n \binom{-an-b-1}{k-2}(k-1)!B_{n,k}(y) =\sum_{k=2}^n \binom{b(k-1)}{k-1}(k-1)!B_{n,k}(x).
\end{equation*}
The statement then follows by taking the difference.
\end{proof}

\begin{remark}
Note that equations \eqref{eq:inputy} and \eqref{eq:inputx} give an inverse relation: It is also true that for any given sequence $y=(y_1,y_2,\dots)$, the sequence $x=(x_1,x_2,\dots)$ defined by \eqref{eq:inputx} satisfies the relation \eqref{eq:inputy}.
\end{remark}

\begin{example} 
For $b=1$, Theorem~\ref{thm:invRel} gives the inverse relation
\begin{align*}
y_n &= \sum_{k=1}^n \binom{an+k}{k-1}(k-1)!B_{n,k}(x), \\
x_n &= \sum_{k=1}^n \binom{-an-2}{k-1}(k-1)!B_{n,k}(y),
\end{align*}
which can be found in \cite[Theorem~F, p. 151]{Comtet}. In particular, if $x$ is replaced by $-x$, we arrive at the symmetric relation (cf. \cite[Theorem~10]{mihou10}):
\begin{align*}
y_n &= \sum_{k=1}^n (-1)^{k} \binom{an+k}{k-1}(k-1)!B_{n,k}(x), \\
x_n &= \sum_{k=1}^n (-1)^k \binom{an+k}{k-1}(k-1)!B_{n,k}(y).
\end{align*}
\end{example}

\begin{example}[Generating function]
Consider the formal power series
\begin{equation*}
Y(t) = 1+\sum_{n\ge 1} y_n \frac{t^n}{n!}
\end{equation*}
with $y_n$ as in \eqref{eq:inputy} for any given sequence $x=(x_1,x_2,\dots)$. 

As mentioned at the beginning of the section, the logarithmic polynomials associated with $Y(t)$ are given by $\Q_{n,0}(-1,y)$, see \eqref{eq:logpotPolynomials}. By means of Theorem~\ref{lambdaTheorem}, these polynomials can then be written in terms of $x$, namely $\L_n(y) = \Q_{n,b}(-1+an,x)$, and so
\begin{equation*}
 \log\big(Y(t)\big) = \sum_{n\ge 1} \Q_{n,b}(-1+an,x)\frac{t^n}{n!}.
\end{equation*}
Similarly, for any $r$, $\P_n^{(r)}(y)=r\Q_{n,0}(r-1,y)=r\Q_{n,b}(r-1+an,x)$, thus
\begin{equation*}
 Y(t)^r = 1+\sum_{n\ge 1} r\Q_{n,b}(r-1+an,x)\frac{t^n}{n!}.
\end{equation*}
In particular, for any polynomial $F(z)=\sum_{\ell=0}^m c_\ell\, z^\ell$, we have
\begin{equation*}
 F(Y(t)) =F(1) + \sum_{n\ge 1}\left(\sum_{\ell=1}^m c_\ell\cdot \ell\Q_{n,b}(\ell-1+an,x)\right) \frac{t^n}{n!}.
\end{equation*}
\end{example}

\end{document}